\documentclass[final,1p,times]{elsarticle}
\usepackage{amssymb}
\usepackage{amsmath}
\usepackage{amsthm}
\usepackage[utf8]{inputenc}
\usepackage{marginnote}
\usepackage{amsfonts}
\usepackage{textcomp}
\usepackage{color,soul}
\usepackage{placeins}
\usepackage{tikz}
\usepackage{makecell}
\usepackage{amssymb}
\usepackage{graphicx}
\usepackage{hyperref}
\usepackage[none]{hyphenat}
\usepackage{lipsum}
\newtheorem{thm}{Theorem}[section]
\newtheorem{lem}[thm]{Lemma}
\newtheorem{rem}[thm]{Remark}
\newtheorem{prop}[thm]{Proposition}
\newtheorem{cor}[thm]{Corollary}
\newtheorem{example}[thm]{Example}
\newdefinition{defn}[thm]{Definition}
\allowdisplaybreaks
\journal{}

\begin{document}

\begin{frontmatter}

\title{The structure of optimal dual frames for probabilistic erasures under  Hilbert--Schmidt norm }

\author[shan]{Shankhadeep Mondal}
\ead{shankhadeep.mondal@ucf.edu}
\author[ram]{R. N. Mohapatra}
\ead{ram.mohapatra@ucf.edu}
\author[yao]{Jen-Chih Yao}
\ead{yaojc@mail.cmu.edu.tw}

\address{Department of Mathematics, University of Central Florida}
\address{Department of Mathematics, University of Central Florida}
\address{Center for General Education, China Medical University, Taichung, Taiwan}

		\cortext[shan]{Corresponding Author: shankhadeep.mondal@ucf.edu}

\begin{abstract}
Frames provide redundant representations that enable stable signal reconstruction under coefficient losses. In this paper, we study optimal dual frames for probabilistic erasures using the Hilbert--Schmidt norm of the associated error operators. We characterize dual frames that are optimal for  $1-$erasures and establish conditions under which the canonical dual is not only optimal but also unique. We further derive lower bounds for the probabilistic reconstruction error for any $m-$ erasure and identify classes of frames for which the canonical dual remains optimal. In addition, we analyze the geometric structure of the set of optimal dual frames, showing that it is a nonempty compact convex set. These results provide new insights into robustness and optimal reconstruction in probabilistic erasure models.
\end{abstract}
\begin{keyword}
			Erasures, Frames, Codes, Optimal dual frame, Hilbert–Schmidt Norm.
			\MSC[2010] 42C15, 47B02, 94A12
		\end{keyword}

\end{frontmatter}

\section{Introduction}

Frames, introduced by Duffin and Schaeffer \cite{duff}, generalize orthonormal bases in Hilbert spaces by allowing redundant representations of vectors. This redundancy provides stability and robustness in signal reconstruction and has made frame theory an important tool in signal processing, coding theory, and data transmission. In practical communication systems, adverse network conditions such as congestion or channel limitations may cause loss of transmitted frame coefficients, leading to \emph{erasures}. The redundancy of frames enables recovery of signals even when some coefficients are missing, motivating the study of frame representations that are resilient to such losses.

A significant amount of research has focused on optimal reconstruction under erasures using frame-theoretic methods. Casazza and Kovačević \cite{casa2} investigated equal norm tight frames and analyzed their robustness to erasures. From a coding-theoretic viewpoint, Goyal, Kovačević, and Kelner \cite{goya} showed that uniform tight frames are optimal for handling single erasures. Holmes and Paulsen \cite{holm} introduced the operator norm as a measure of reconstruction error and derived conditions under which Parseval frames and their canonical duals minimize the worst-case error caused by erasures. Several subsequent works studied optimal dual frames under different error measures. Bodmann \cite{bodm1} considered the average of operator norms over all erasure patterns, while Lopez and Han \cite{jerr} obtained conditions for the canonical dual to be the unique optimal dual and analyzed the structure of optimal dual frames \cite{jins}. Pehlivan, Han, and Mohapatra \cite{sali} investigated optimal dual frames by minimizing the spectral radius of the associated error operators, and related problems for multiple erasures were further studied in \cite{peh,dev}. Alternative optimality measures such as the numerical radius and combined operator-norm measures have also been explored in \cite{ara,deep,shan1}. The study of optimal dual frames and optimal dual pairs has attracted considerable attention in recent years, and a variety of optimality criteria have been developed, including the operator norm, spectral radius, numerical radius, and averaged reconstruction-error measures. Related problems have also been investigated for several classes of frames and frame systems\cite{shan2,deep,deep1,ara,arab,arab1,arab2,shan5}.

A probabilistic framework for erasures was introduced in \cite{leng3}, where a probability distribution on erasure locations leads to an associated weight sequence used in the formulation of probabilistic error operators. This model has been further studied in \cite{leng,li2,shan,li1, shan1} in the context of optimal dual frames. These works demonstrate that the geometry of frame vectors and the choice of dual frames play a crucial role in minimizing reconstruction errors.

In this paper, we study optimal dual frames for probabilistic erasures using the Hilbert--Schmidt norm of the associated error operators as the measure of reconstruction error. We derive structural conditions under which the canonical dual frame becomes optimal and obtain quantitative bounds for the probabilistic reconstruction error in the presence of multiple erasures.

The paper is organized as follows. Section~2 reviews basic concepts of frame theory and introduces the probabilistic erasure model and associated error operators. Section~3 investigates probabilistic optimal dual frames and establishes conditions under which the canonical dual frame is optimal. In particular, we derive bounds for the $m$-erasure reconstruction error and characterize classes of frames for which the canonical dual remains optimal. Section~4 studies recovery of frame coefficients when erasure locations are unknown and proposes a probabilistic identification principle for determining the surviving coefficients.

\section{Preliminaries and the Probabilistic Erasure Model}

Let $\mathcal{H}_n$ denote an $n$-dimensional (real or complex) Hilbert space. 
A finite sequence of elements $F=\{f_i\}_{i=1}^N \subset \mathcal{H}_n$ is called a \textit{frame} for $\mathcal{H}_n$ if there exist constants $A,B>0$ such that
\[
A\|f\|^2 \leq \sum_{i=1}^N |\langle f,f_i\rangle|^2 \leq B\|f\|^2,
\qquad \forall f\in \mathcal{H}_n .
\]

The numbers $A$ and $B$ are called the \textit{frame bounds}. They are not unique. 
The \textit{optimal lower frame bound} is the supremum over all lower frame bounds, and the \textit{optimal upper frame bound} is the infimum over all upper frame bounds.

A frame $F=\{f_i\}_{i=1}^N$ is called \textit{normalized} if $\|f_i\|=1$ for every $i$. 
If $A=B$, that is,
\[
\sum_{i=1}^N |\langle f,f_i\rangle|^2 = A\|f\|^2, \qquad \forall f\in \mathcal{H}_n,
\]
then $F$ is called a \textit{tight frame}. If $A=B=1$, then $F$ is called a \textit{Parseval frame}. Every finite sequence $\{f_i\}_{i=1}^R$ in $\mathcal{H}_n$ is a frame for the Hilbert space $W := \mathrm{span}\{f_i\}_{i=1}^R .$

Let $F=\{f_k\}_{k=1}^N$ be a frame for $\mathcal{H}_n$. The linear mapping 
\[
T_F : \mathcal{H}_n \rightarrow \mathbb{C}^N, \qquad 
T_F(f)=\{\langle f,f_i\rangle\}_{i=1}^N
\]
is called the \textit{analysis operator}.  The adjoint operator $T_F^* : \mathbb{C}^N \rightarrow \mathcal{H}_n$ defined by
\[
T_F^*\left(\{c_i\}_{i=1}^N\right)=\sum_{i=1}^N c_i f_i
\]
is called the \textit{synthesis operator} (or \textit{preframe operator}). The \textit{frame operator} $S_F : \mathcal{H}_n \to \mathcal{H}_n$ is defined by
\[
S_F f = T_F^*T_F f = \sum_{i=1}^N \langle f,f_i\rangle f_i .
\]

The operator $S_F$ is positive and invertible on $\mathcal{H}_n$, which yields the reconstruction formula
\[
f=\sum_{i=1}^N \langle f,S_F^{-1}f_i\rangle f_i = \sum_{i=1}^N \langle f,f_i\rangle S_F^{-1}f_i.
\]

A frame $G=\{g_i\}_{i=1}^N$ in $\mathcal{H}_n$ is called a \textit{dual frame} of $F=\{f_i\}_{i=1}^N$ if every $f\in \mathcal{H}_n$ admits the representations
\[
f = \sum_{i=1}^N \langle f,f_i\rangle g_i
  = \sum_{i=1}^N \langle f,g_i\rangle f_i .
\]

It is known that $S_F^{-1}F=\{S_F^{-1}f_i\}_{i=1}^N$ is a frame for $\mathcal{H}_n$, called the \textit{ standard dual or canonical dual } of $F$. Moreover for $N>n$, there exist infinitely many dual frames of $F$ \cite{ole}. Every dual frame 
$G=\{g_i\}_{i=1}^N$ of $F$ can be written in the form
\[
g_i = S_F^{-1}f_i + u_i , \qquad i=1,\dots,N,
\]
where the sequence $\{u_i\}_{i=1}^N$ satisfies
\[
\sum_{i=1}^N \langle f,f_i\rangle u_i = 
\sum_{i=1}^N \langle f,u_i\rangle f_i =0,
\qquad \forall f\in \mathcal{H}_n .
\]

If $G$ is a dual frame of $F$, then
\[
\sum_{i=1}^N \langle g_i,f_i\rangle
= \mathrm{tr}(T_F T_G^*)
= \mathrm{tr}(T_G^* T_F)
= \mathrm{tr}(I)
= n .
\]
In particular, if $F$ is a Parseval frame, then $\sum_{i=1}^N \|f_i\|^2 = n .$

\medskip

During data transmission, it is possible that some frame coefficients in the reconstruction formula are lost. 
Let $\{p_i\}_{i=1}^N$ be a discrete probability distribution satisfying
\[
\sum_{i=1}^N p_i = 1, \qquad 0 \le p_i <1 .
\]

Here $p_i$ represents the probability that the $i$-th coefficient is erased. 
Following \cite{leng}, the associated weight number sequence $\{q_i\}_{i=1}^N$ is defined by
\[
q_i = \frac{1}{1-p_i}\cdot \frac{N-1}{n}, 
\qquad i=1,2,\dots,N .
\]

It is easy to verify that $\sum_{i=1}^N \frac{1}{q_i} = n .$ Moreover, if $N\ge2$ then $q_i>0$, and if $N>n$ then $1\le q_i<\infty$ for all $i$. If errors occur in $m$ positions, the probabilistic error operator with respect to the weights $\{q_i\}$ is defined by
\[
\mathcal{E}_{\Lambda,q}^{\text{prob}}(F,G)f
= T_G^* D^{(m)}_q T_F f
= \sum_{i\in\Lambda} q_i \langle f,f_i\rangle g_i ,
\qquad f\in \mathcal{H}_n ,
\]
where $\Lambda$ denotes the set of indices corresponding to the erased coefficients and $D^{(m)}_q$ is the diagonal matrix whose $i$-th diagonal entry equals $q_i$ if $i\in\Lambda$ and $0$ otherwise. For a frame $F$ and a dual $G$, the maximum probabilistic error for $m$ erasures is defined by
\[
\max \left\{ 
\mathcal{M}\big(T_G^* D_q^{(m)} T_F \big) :
D_q^{(m)} \in \mathcal{D}_q^{(m)}
\right\},
\]
where $\mathcal{M}$ denotes an appropriate measure of the error operator and $\mathcal{D}_q^{(m)}$ is the set of all diagonal matrices with exactly $m$ nonzero entries. The following notions are probabilistic analogues of the uniform dual frames introduced in the deterministic erasure setting.

\begin{defn}
Let $F=\{f_i\}_{i=1}^N$ be a frame for $\mathcal{H}_n$ and let $\{q_i\}_{i=1}^N$ be a weight number sequence. 
A dual frame $G=\{g_i\}_{i=1}^N$ of $F$ is called a \textit{probabilistic $1$-uniform dual} if there exists a constant $c$ such that $q_i \langle f_i,g_i\rangle = c,  \quad i=1,\dots,N .$
\end{defn}

\begin{rem}
If $G$ is a probabilistic $1$-uniform dual of $F$, then the constant must be $c=1$. 
Indeed,
\[
n = \sum_{i=1}^N \langle f_i,g_i\rangle
= \sum_{i=1}^N \frac{c}{q_i}
= c n .
\]
\end{rem}

\begin{defn}
Given weights $\{q_i\}_{i=1}^N$, a probabilistic $1$-uniform dual $G$ of $F$  is called a \textit{probabilistic $2$-uniform dual} if there exists a constant $c'$ such that
\[
q_i q_j \langle f_i,g_j\rangle \langle f_j,g_i\rangle = c' ,
\qquad i\neq j .
\]
\end{defn}

For each $k$, define
\[
\mathcal{F}_q^{(k)}(F,G)
= \max_{|\Lambda|=k}
\left\|\mathcal{E}_{\Lambda,q}^{\text{prob}}(F,G)\right\|_{HS},
\]
where $\|\cdot\|_{HS}$ denotes the Hilbert--Schmidt norm. 
Furthermore,
\[
\mathcal{F}_q^{(k)}(F)
= \inf \{\mathcal{F}_q^{(k)}(F,G) : G \text{ is a dual frame of } F\}.
\]

A dual frame $G$ of $F$ is called a \textit{$k$-erasure probabilistic optimal dual} if it is $(k-1)$-erasure probabilistic optimal and
\[
\mathcal{F}_q^{(k)}(F,G) = \mathcal{F}_q^{(k)}(F).
\]

\begin{prop}\label{prop2point4}
Let $F=\{f_i\}_{i=1}^N$ be a frame for $\mathcal{H}_n$ with weights $\{q_i\}_{i=1}^N$ and let $G=\{g_i\}_{i=1}^N$ be a dual frame of $F$. Then
\[
\mathcal{F}_q^{(1)}(F,G)
= \max_{1\le i\le N} q_i \|f_i\|\,\|g_i\|.
\]
\end{prop}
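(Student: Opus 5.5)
When a single erasure occurs, $\Lambda=\{i\}$, and the error operator is $\mathcal{E}_{\{i\},q}^{\text{prob}}(F,G)f = q_i\langle f,f_i\rangle g_i$, that is, the rank-one operator $q_i\, g_i\otimes f_i$. The plan is to compute the Hilbert--Schmidt norm of this rank-one operator directly and then take the maximum over $i$.

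First, fix $i$ and pick an orthonormal basis $\{e_k\}_{k=1}^n$ of $\mathcal{H}_n$. Then
\[
\bigl\|\mathcal{E}_{\{i\},q}^{\text{prob}}(F,G)\bigr\|_{HS}^2=\sum_{k=1}^n \bigl\|q_i\langle e_k,f_i\rangle g_i\bigr\|^2 = q_i^2\|g_i\|^2\sum_{k=1}^n|\langle e_k,f_i\rangle|^2 = q_i^2\|f_i\|^2\|g_i\|^2,
\]
where the last equality is Parseval's identity. The factor $q_i$ comes out directly because $q_i>0$ under the standing assumption $N\ge 2$. Taking square roots gives $q_i\|f_i\|\,\|g_i\|$.

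Equivalently, one can use $\|A\|_{HS}^2=\mathrm{tr}(A^*A)$. Here $A^*A f = q_i^2\|g_i\|^2\langle f,f_i\rangle f_i$, whose trace is $q_i^2\|g_i\|^2\|f_i\|^2$.

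Finally, $\mathcal{F}_q^{(1)}(F,G)$ is by definition the maximum of this quantity over all $\Lambda$ with $|\Lambda|=1$, i.e.\ over $i=1,\dots,N$. This is exactly the stated formula.

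There is no real obstacle. The only points needing care are the choice of basis-independent expression for the Hilbert--Schmidt norm and the sign of $q_i$, which is positive.
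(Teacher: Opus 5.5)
Your proof is correct and is essentially the paper's argument. The paper also computes the Hilbert--Schmidt norm of the single-erasure operator directly, via $\|T_G^*D_q^{(1)}T_F\|_{HS}^2=\mathrm{tr}(D_q^{(1)}T_FT_F^*D_q^{(1)}T_GT_G^*)=q_i^2\|f_i\|^2\|g_i\|^2$. That is your $\mathrm{tr}(A^*A)$ variant written with Gram matrices, after which the paper likewise takes the maximum over $i$.
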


\begin{proof}
If an error occurs at index $i$, then
\[
\mathcal{E}_{\Lambda,q}^{\text{prob}}(F,G)f
= q_i\langle f,f_i\rangle g_i .
\]
A direct computation yields
\small \begin{align*}
    \left\|\mathcal{E}_{\Lambda,q}^{prob}(F,G) \right\|_{HS} = \left\|T_{G}^*D_q^{(1)}T_F\right\|_{HS} = \sqrt{tr\left(T_{F}^*D_q^{(1)}T_GT_{G}^*D_q^{(1)}T_F \right)} = \sqrt{tr\left(D_q^{(1)}T_FT_{F}^*D_q^{(1)}T_GT_{G}^* \right)}  = \sqrt{q_{i}^2\|f_i\|^2\|g_i\|^2} = q_i\|f_i\|\;\|g_i\|. 
\end{align*}
Hence
\[
\mathcal{F}_q^{(1)}(F,G)
= \max_{1\le i\le N} q_i \|f_i\|\,\|g_i\|.
\]
\end{proof}

The following proposition gives a lower bound for the quantity $\mathcal{F}_{q}^{(1)}(F,G)$ and characterizes the dual frames for which this bound is attained.

\begin{prop}\label{prop2point5}
Let $F=\{f_i\}_{i=1}^N$ be a frame for $\mathcal{H}_n$ with weight number sequence $\{q_i\}_{i=1}^N$, and let $G=\{g_i\}_{i=1}^N$ be a dual frame of $F$. Then
\(
\mathcal{F}_{q}^{(1)}(F,G) \geq 1 .
\)
Moreover, equality holds if and only if
\(
\|f_i\|\,\|g_i\| = \frac{1}{q_i}, \qquad i=1,2,\ldots,N .
\)
\end{prop}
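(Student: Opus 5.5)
By Proposition~\ref{prop2point4} we have $\mathcal{F}_q^{(1)}(F,G)=\max_{1\le i\le N} q_i\|f_i\|\,\|g_i\|$, so the task reduces to a scalar inequality about the numbers $a_i:=q_i\|f_i\|\,\|g_i\|$. We will combine two identities from the preliminaries: duality gives $\sum_{i=1}^N\langle f_i,g_i\rangle = \mathrm{tr}(T_G^*T_F)=n$, and the weights satisfy $\sum_{i=1}^N 1/q_i = n$.

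\emph{The inequality.} Suppose, for contradiction, that $M:=\max_i a_i<1$. By Cauchy--Schwarz, $|\langle f_i,g_i\rangle|\le \|f_i\|\,\|g_i\| = a_i/q_i$. Therefore
\[
n=\Big|\sum_{i=1}^N\langle f_i,g_i\rangle\Big|\le \sum_{i=1}^N \|f_i\|\,\|g_i\| = \sum_{i=1}^N \frac{a_i}{q_i}\le M\sum_{i=1}^N\frac{1}{q_i}=Mn<n ,
\]
which is a contradiction. Hence $\mathcal{F}_q^{(1)}(F,G)\ge 1$.

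\emph{The equality case.} If $\|f_i\|\,\|g_i\|=1/q_i$ for every $i$, then every $a_i=1$, so the maximum equals $1$. Conversely, suppose the maximum equals $1$, so $a_i\le 1$ for all $i$. The chain above now reads
\[
n\le \sum_{i=1}^N \frac{a_i}{q_i}\le \sum_{i=1}^N \frac{1}{q_i}=n ,
\]
so every inequality in it is an equality. In particular $\sum_i (1-a_i)/q_i=0$. Each term here is nonnegative, and $1/q_i>0$ because $q_i$ is finite and positive (as noted in the preliminaries). It follows that $a_i=1$ for each $i$, which is exactly $\|f_i\|\,\|g_i\|=1/q_i$.

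The main point requiring care is this equality step: one must check that the $1/q_i$ are strictly positive, so that the sum of nonnegative terms can only vanish termwise. As a by-product, equality also forces equality in Cauchy--Schwarz and in the triangle inequality. This means each $\langle f_i,g_i\rangle$ is real and nonnegative, and $g_i$ is a positive multiple of $f_i$ whenever $f_i\neq0$. The statement does not need this, but it is consistent with $G$ being a probabilistic $1$-uniform dual.
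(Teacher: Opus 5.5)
Your proof is correct and is essentially the paper's argument: you reduce via Proposition~\ref{prop2point4} to the numbers $q_i\|f_i\|\,\|g_i\|$ and play $n=\bigl|\sum_i\langle f_i,g_i\rangle\bigr|\le\sum_i\|f_i\|\,\|g_i\|$ against $\sum_i 1/q_i=n$. As in the paper, this gives the bound by contradiction, and the equality case because a single index with $q_j\|f_j\|\,\|g_j\|<1$ would make the chain strict (your termwise-vanishing formulation of this step is equivalent). Your closing remark on equality in Cauchy--Schwarz and the triangle inequality is what the paper proves separately in the corollary that follows (that such a $G$ is a probabilistic $1$-uniform dual), and you get slightly more: $g_i$ is a positive multiple of $f_i$.
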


\begin{proof}
Suppose $\mathcal{F}_{q}^{(1)}(F,G) < 1$. Then, $q_i \|f_i\|\,\|g_i\| < 1 ,$ for all $1 \leq i \leq N.$ Consequently, $\|f_i\|\,\|g_i\| < \frac{1}{q_i}, $ for all $ i .$ Using the fact that $G$ is a dual frame of $F$, we obtain
\[
n = \left| \sum_{i=1}^N \langle f_i , g_i \rangle \right|
\le \sum_{i=1}^N |\langle f_i , g_i \rangle|
\le \sum_{i=1}^N \|f_i\|\,\|g_i\|
< \sum_{i=1}^N \frac{1}{q_i}
= n,
\]
which is impossible. Hence $\mathcal{F}_{q}^{(1)}(F,G) \ge 1$. Now assume that $\mathcal{F}_{q}^{(1)}(F,G)=1$. Then $\max\limits_{1\le i\le N} q_i\|f_i\|\,\|g_i\| = 1 .$ If there exists $j$ such that $q_j\|f_j\|\,\|g_j\| < 1$, then $\|f_j\|\,\|g_j\| < \frac{1}{q_j},$
and the same argument as above yields
\[
n
= \left|\sum_{i=1}^N \langle f_i , g_i \rangle \right|
\le \sum_{i=1}^N \|f_i\|\,\|g_i\|
< \sum_{i=1}^N \frac{1}{q_i}
= n,
\]
a contradiction. Therefore, $q_i\|f_i\|\,\|g_i\| = 1, \quad \forall\, i .$

Conversely, if $\|f_i\|\,\|g_i\| = \frac{1}{q_i}$ for all $i$, then $q_i\|f_i\|\,\|g_i\| = 1, \quad \forall\, i,$ and hence
\(
\mathcal{F}_{q}^{(1)}(F,G) = \max\limits_{1\le i\le N} q_i\|f_i\|\,\|g_i\| = 1.
\)
This completes the proof.
\end{proof}

\begin{cor}
    Let $F = \{f_i\}_{i=1}^N$ be a frame for $\mathcal{H}_n$ with weight number sequence $\{q_i\}_{i=1}^N$  and $G = \{g_i\}_{i=1}^N$ be a dual of $F$ which satisfy $\mathcal{F}_{q}^{(1)}(F,G) = 1 .$ Then $G$ is a probabilistic $1-$uniform dual of $F.$
\end{cor}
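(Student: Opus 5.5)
We need to show that $q_i\langle f_i,g_i\rangle=1$ for every $i$. The plan is to invoke Proposition~\ref{prop2point5} and then examine when its chain of inequalities becomes a chain of equalities.

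First, since $\mathcal{F}_q^{(1)}(F,G)=1$, Proposition~\ref{prop2point5} gives $\|f_i\|\,\|g_i\|=\frac{1}{q_i}$ for all $i$. Substituting this into the duality identity, and using $\sum_{i=1}^N \frac{1}{q_i}=n$, yields
\[
n=\left|\sum_{i=1}^N\langle f_i,g_i\rangle\right|\le\sum_{i=1}^N|\langle f_i,g_i\rangle|\le\sum_{i=1}^N\|f_i\|\,\|g_i\|=\sum_{i=1}^N\frac1{q_i}=n .
\]
Hence both inequalities are equalities.

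Next, I extract termwise information from the two equalities.
\begin{itemize}
\item Each term satisfies $|\langle f_i,g_i\rangle|\le\|f_i\|\,\|g_i\|$, and the sums agree, so Cauchy--Schwarz holds with equality in every term: $|\langle f_i,g_i\rangle|=\frac1{q_i}$.
\item Equality in the triangle inequality $\left|\sum_i z_i\right|=\sum_i|z_i|$ forces all nonzero $z_i=\langle f_i,g_i\rangle$ to share a common argument.
\item That common argument must be $0$, because $\sum_i\langle f_i,g_i\rangle=n>0$ is a positive real number.
\end{itemize}
Therefore every $\langle f_i,g_i\rangle$ is a nonnegative real, and so $\langle f_i,g_i\rangle=|\langle f_i,g_i\rangle|=\frac1{q_i}$.

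Multiplying by $q_i$ gives $q_i\langle f_i,g_i\rangle=1$ for all $i$, which is exactly the definition of a probabilistic $1$-uniform dual with $c=1$, consistent with the Remark.

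The only delicate point is the triangle-inequality equality case in the complex setting. It is handled cleanly by noting that each $z_i$ has $|z_i|=1/q_i>0$, so $\sum_i(|z_i|-\mathrm{Re}\,z_i)=n-n=0$ with each summand nonnegative. This forces $\mathrm{Re}\,z_i=|z_i|$, i.e. $z_i$ is real and positive.
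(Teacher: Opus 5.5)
Your proposal is correct and follows essentially the same route as the paper: invoke Proposition~\ref{prop2point5}, force equality throughout $n\le\sum_i|\langle f_i,g_i\rangle|\le\sum_i\|f_i\|\,\|g_i\|=n$, and then use $\sum_i\bigl(|z_i|-\mathrm{Re}\,z_i\bigr)=0$ to get $\langle f_i,g_i\rangle=1/q_i$ for each $i$. The paper's final step writes $\langle f_j,g_j\rangle=a_j+ib_j$ and compares $\sum_j a_j$ with $\sum_j\sqrt{a_j^2+b_j^2}$, which is the same argument.
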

\begin{proof}
 By Proposition \ref{prop2point5},  $\mathcal{F}_{q}^{(1)}(F,G) = 1 $  gives $\|f_i\|\,\|g_i\| = \frac{1}{q_i}, \quad \forall\, i.$ We then have
\[
n = \left|\sum_{i=1}^N \langle f_i , g_i \rangle \right| 
   \leq \sum_{i=1}^N \left| \langle f_i , g_i \rangle \right| 
   \leq \sum_{i=1}^N \|f_i\|\,\|g_i\|
   = \sum_{i=1}^N \frac{1}{q_i}
   = n.
\]
The chain of equalities implies that $|\langle f_j , g_j \rangle| = \|f_j\|\,\|g_j\|, \quad \forall\, 1 \leq j \leq N.$\\
Let $\langle f_j , g_j \rangle = a_j + i b_j$, where $a_j, b_j \in \mathbb{R}$ for each $j$.  
Then,
\[
\sum_{j=1}^N a_j = n, \qquad \sum_{j=1}^N b_j = 0, \qquad 
\sqrt{a_j^2 + b_j^2} = \frac{1}{q_j}, \quad 1 \leq j \leq N.
\]
Therefore,
\[
n = \sum_{j=1}^N a_j 
   \leq \sum_{j=1}^N \sqrt{a_j^2 + b_j^2}
   = \sum_{j=1}^N \frac{1}{q_j}
   = n.
\]
The equality condition forces $b_j = 0$ and $a_j = |a_j|$ for all $j$.  
Consequently,
\[
\langle f_j , g_j \rangle = \frac{1}{q_j}, \quad \forall\, 1 \leq j \leq N.
\]

\end{proof}

\section{Optimal Dual Frames for Probabilistic Erasures }

In this section, we investigate the problem of identifying dual frames that minimize the probabilistic reconstruction error under coefficient erasures. In particular, we study conditions under which the canonical dual frame $S_F^{-1}F$ achieves optimality with respect to the probabilistic $m$-erasure error. We first establish sufficient conditions guaranteeing that the canonical dual is the unique $1$-erasure probabilistic optimal dual. We then derive quantitative bounds for the $m$-erasure error and characterize classes of frames for which the canonical dual remains optimal for all erasure levels.

The following proposition provides a sufficient condition under which the canonical dual of \( F \) achieves probabilistic optimality. To this end, define
\[
M := \max \left\{ q_i\|f_i\| \cdot \|S_{F}^{-1} f_i\| : 1 \leq i \leq N \right\},
\]
and set
\[
I_1 := \left\{ i : q_i\|f_i\| \cdot \|S_{F}^{-1}f_i\| = M \right\}, 
\qquad 
I_2 := \{1,2,\ldots,N\} \setminus I_1.
\]
Let \( H_j := \mathrm{span}\{ f_i : i \in I_j \} \) for \( j = 1,2 \). The sufficient condition for optimality is expressed in terms of these subspaces.

\begin{prop}\label{prop4point1}
Let $F = \{f_i\}_{i=1}^N $ be a frame for $\mathcal{H}_n$ with weight number sequence $\{q_i\}_{i=1}^N$ . If $H_1 \cap H_2 = \{0\}$, then the canonical dual is a $1-$erasure probabilistic optimal dual of $F.$ 
\end{prop}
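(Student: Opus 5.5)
The plan is a contradiction argument based on the standard parametrization of dual frames recalled in Section~2. Suppose the canonical dual is not a $1$-erasure probabilistic optimal dual. Then, by Proposition~\ref{prop2point4}, there is a dual frame $G=\{g_i\}_{i=1}^N$ of $F$ with
\[
\max_{1\le i\le N} q_i\|f_i\|\,\|g_i\| \;<\; M = \mathcal{F}_q^{(1)}(F,S_F^{-1}F).
\]
By Proposition~\ref{prop2point5} we have $M\ge 1>0$, so $f_i\neq 0$ for every $i\in I_1$. For each $i\in I_1$ we get $q_i\|f_i\|\,\|g_i\|<M=q_i\|f_i\|\,\|S_F^{-1}f_i\|$. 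Dividing by $q_i\|f_i\|>0$ gives $\|g_i\|<\|S_F^{-1}f_i\|$ for all $i\in I_1$. Since $I_1\neq\emptyset$, summing yields
\[
\sum_{i\in I_1}\|g_i\|^2<\sum_{i\in I_1}\|S_F^{-1}f_i\|^2 .
\]
The goal is to show that the reverse (non-strict) inequality always holds when $H_1\cap H_2=\{0\}$.

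Write $g_i=S_F^{-1}f_i+u_i$, where $\sum_{i=1}^N\langle f,u_i\rangle f_i=0$ for all $f\in\mathcal{H}_n$. Splitting this sum over $I_1$ and $I_2$ gives
\[
\sum_{i\in I_1}\langle f,u_i\rangle f_i=-\sum_{i\in I_2}\langle f,u_i\rangle f_i .
\]
The left side lies in $H_1$ and the right side lies in $H_2$. The hypothesis $H_1\cap H_2=\{0\}$ therefore forces the operator $A f:=\sum_{i\in I_1}\langle f,u_i\rangle f_i$ to vanish identically on $\mathcal{H}_n$. This is the key step, and it is exactly where the hypothesis enters: the global duality constraint is localized to the index block $I_1$.

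Next, compute the trace of $S_F^{-1}A=0$. The rank-one operator $f\mapsto\langle f,u_i\rangle S_F^{-1}f_i$ has trace $\langle S_F^{-1}f_i,u_i\rangle$. Hence
\[
\sum_{i\in I_1}\langle S_F^{-1}f_i,u_i\rangle=\mathrm{tr}(S_F^{-1}A)=0 .
\]
Expanding the norms then gives
\[
\sum_{i\in I_1}\|g_i\|^2=\sum_{i\in I_1}\|S_F^{-1}f_i\|^2+\sum_{i\in I_1}\|u_i\|^2+2\,\mathrm{Re}\sum_{i\in I_1}\langle S_F^{-1}f_i,u_i\rangle\ \ge\ \sum_{i\in I_1}\|S_F^{-1}f_i\|^2 ,
\]
because the cross term vanishes. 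This contradicts the strict inequality from the first step. Hence $\mathcal{F}_q^{(1)}(F,G)\ge M$ for every dual $G$, and the canonical dual attains $\mathcal{F}_q^{(1)}(F)$. Since $1$-erasure optimality has no lower-order requirement, this is exactly the claim.

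The only real obstacle is the localization step, namely recognizing that $H_1\cap H_2=\{0\}$ kills the $I_1$-part of $\sum\langle\cdot,u_i\rangle f_i$. The trace identity and the norm expansion are routine once that is in place. One small point to check is that $I_1$ is nonempty and that $f_i\neq0$ on $I_1$, so the division and the strict inequality are legitimate; both follow from $M\ge1$.
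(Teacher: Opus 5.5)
Your proof is correct and follows the paper's argument essentially step for step. Both argue by contradiction: write $g_i=S_F^{-1}f_i+u_i$, sum the strict inequalities $\|g_i\|<\|S_F^{-1}f_i\|$ over $I_1$, and use $H_1\cap H_2=\{0\}$ to localize the duality constraint to $I_1$, which forces $\sum_{i\in I_1}\langle S_F^{-1}f_i,u_i\rangle=0$. You also supply two details the paper leaves implicit: the trace computation behind that vanishing, and the nondegeneracy check that $M\ge 1$ gives $q_i\|f_i\|>0$ on $I_1$, which is needed to divide.
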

\begin{proof}
We shall give a proof by contradiction. Suppose there exists a dual $G = \{g_i\}_{i=1}^N $  such that  $\mathcal{F}_{q}^{(1)} (F,G) < \mathcal{F}_{q}^{(1)} (F, S_{F}^{-1} F) .$ Then for all $i \in I_1,\;	q_i\|f_i \|\,\|g_i \| < M =  q_i\|f_i \| \left\| S_{F}^{-1}f_i \right\| .$ The dual $G $ can be written as  $ \{ S_{F}^{-1}f_i + u_i \}_{i=1}^N ,$  where $\sum\limits_{i=1}^N \langle f,u_i \rangle f_i =0,$ for all $f \in \mathcal{H}_n.$
	So, we may write $\|S_{F}^{-1}f_i + u_i \|^2 < \|S_{F}^{-1}f_i \|^2,$ for all $i \in I_1,$ which in turn leads to
			\begin{align}\label{eqn4point9}
				\sum_{i \in I_1} \| u_i \|^2 + 2 Re \bigg( \sum_{i \in I_1} \langle  S_{F}^{-1}f_i ,u_i \rangle \bigg) < 0 .
			\end{align}
			Now, by the condition $\sum\limits_{i=1}^N  \langle f,u_i \rangle f_i =0,\,f \in \mathcal{H}_n$ and   $H_1 \cap H_2 = \{0\},$  we get  $\sum\limits_{i \in I_1} \langle  S_{F}^{-1}f_i ,u_i \rangle = 0.$  Thus, from \eqref{eqn4point9}, we have $ \sum\limits_{i \in I_1} \| u_i \|^2 < 0,$ which is absurd. Hence, $S_{F}^{-1}F$ is a $1-$erasure probabilistic optimal dual of $F.$
			
		\end{proof}

We now strengthen the previous result by identifying conditions under which the canonical dual is not only optimal but also unique.  The following proposition provides a sufficient condition for the uniqueness of the canonical dual as the $1$-erasure probabilistic optimal dual.

\begin{prop}\label{prop4point1}
Let \( F = \{f_i\}_{i=1}^N \) be a frame for the Hilbert space \( \mathcal{H}_n \) with weight number sequence $\{q_i\}_{i=1}^N$ . 
If \( H_1 \cap H_2 = \{0\} \) and \( \{f_i\}_{i \in I_2} \) is linearly independent,  then the canonical dual of \( F \) is the unique \( 1- \)erasure optimal dual of \( F \).
\end{prop}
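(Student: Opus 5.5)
By the previous proposition, $S_F^{-1}F$ is a $1$-erasure probabilistic optimal dual, so $\mathcal{F}_q^{(1)}(F)=M$. To show uniqueness, I take an arbitrary $1$-erasure optimal dual $G=\{S_F^{-1}f_i+u_i\}_{i=1}^N$ and aim to show $u_i=0$ for every $i$. Throughout, $\sum_i\langle f,u_i\rangle f_i=0$ and $\sum_i\langle f,f_i\rangle u_i=0$ for all $f$.

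\textbf{Step 1 (the indices in $I_1$).} Optimality gives $\mathcal{F}_q^{(1)}(F,G)=M$. Hence, by Proposition~\ref{prop2point4}, $q_i\|f_i\|\,\|g_i\|\le M=q_i\|f_i\|\,\|S_F^{-1}f_i\|$ for every $i\in I_1$. Since $f_i\neq 0$ and $q_i>0$, this gives $\|S_F^{-1}f_i+u_i\|^2\le \|S_F^{-1}f_i\|^2$ on $I_1$. Summing over $I_1$ yields
\[
\sum_{i\in I_1}\|u_i\|^2+2\,\mathrm{Re}\sum_{i\in I_1}\langle S_F^{-1}f_i,u_i\rangle\le 0 .
\]
I then reuse the argument of the previous proposition. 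Split $\sum_{i\in I_1}\langle f,u_i\rangle f_i=-\sum_{i\in I_2}\langle f,u_i\rangle f_i$; the left side lies in $H_1$ and the right side in $H_2$, so both vanish. Thus $\sum_{i\in I_1}\langle f,u_i\rangle f_i=0$ for all $f$. Taking the trace of the operator $f\mapsto \sum_{i\in I_1}\langle S_F^{-1}f,u_i\rangle f_i$ gives $\sum_{i\in I_1}\langle S_F^{-1}f_i,u_i\rangle=0$. Therefore $\sum_{i\in I_1}\|u_i\|^2\le 0$, so $u_i=0$ for all $i\in I_1$.

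\textbf{Step 2 (the indices in $I_2$).} From Step 1 we already know $\sum_{i\in I_2}\langle f,u_i\rangle f_i=0$ for every $f\in\mathcal{H}_n$. Since $\{f_i\}_{i\in I_2}$ is linearly independent, every coefficient must vanish: $\langle f,u_i\rangle=0$ for all $f$ and all $i\in I_2$. Hence $u_i=0$ on $I_2$. Combining the two steps, $G=S_F^{-1}F$, which proves uniqueness.

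The main obstacle is Step 1. There one must extract the vanishing of each individual $u_i$, not merely a strict inequality as in the contradiction argument before. This requires using the non-strict inequality coming from equality of the optimal values. It also requires noting that $H_1\cap H_2=\{0\}$ splits the relation $\sum_i\langle f,u_i\rangle f_i=0$ into separate relations on $I_1$ and on $I_2$. Once that splitting is available, Step 2 is immediate from linear independence.
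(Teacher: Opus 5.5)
Your proof is correct and follows essentially the same route as the paper. Both arguments split $\sum_i\langle f,u_i\rangle f_i=0$ using $H_1\cap H_2=\{0\}$, use linear independence to get $u_i=0$ on $I_2$, and use the trace identity $\sum_{i\in I_1}\langle S_F^{-1}f_i,u_i\rangle=0$ on $I_1$. The only difference is cosmetic: you sum the non-strict inequalities over $I_1$ to get $\sum_{i\in I_1}\|u_i\|^2\le 0$ directly, whereas the paper argues by contrapositive and splits into cases according to whether every $\mathrm{Re}\langle S_F^{-1}f_i,u_i\rangle$ vanishes.
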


\begin{proof}
Let \( G = \{g_i\}_{i=1}^N \) be a non-canonical dual of \( F \). Then \( G \) can be expressed as $g_i = S_F^{-1} f_i + u_i, \quad 1 \leq i \leq N,$
where the sequence \( \{u_i\}_{i=1}^N \) satisfies $\sum_{i=1}^N \langle f, u_i \rangle f_i = 0, \quad \forall\, f \in \mathcal{H}_n.$
Rewriting this condition in terms of the index sets \( I_1 \) and \( I_2 \), we obtain
\[
\sum_{i \in I_1} \langle f, u_i \rangle f_i
+ \sum_{i \in I_2} \langle f, u_i \rangle f_i
= 0, \qquad \forall\, f \in \mathcal{H}_n.
\]
Since \( H_1 \cap H_2 = \{0\} \), it follows that
\[
\sum_{i \in I_1} \langle f, u_i \rangle f_i = 0
\quad\text{and}\quad
\sum_{i \in I_2} \langle f, u_i \rangle f_i = 0,
\qquad \forall\, f \in \mathcal{H}_n.
\]
By the linear independence of \( \{f_i\}_{i \in I_2} \), we deduce that \( u_i = 0 \) for all \( i \in I_2 \). Next, consider \( U_1 = \{u_i\}_{i \in I_1} \) and \( F_1 = \{f_i\}_{i \in I_1} \). The orthogonality condition implies $T_{F_1}^* T_{U_1} = 0.$
Consequently,
\[
\sum_{i \in I_1} \langle S_F^{-1} f_i, u_i \rangle
= \operatorname{tr}\!\big( T_{U_1} T_{S_F^{-1} F_1}^* \big)
= \operatorname{tr}\!\big( T_{S_F^{-1} F_1}^* T_{U_1} \big)
= \operatorname{tr}\!\big( S_F^{-1} T_{F_1}^* T_{U_1} \big)
= 0.
\]
Thus,
\[
\operatorname{Re} \left( \sum_{i \in I_1} \langle S_F^{-1} f_i, u_i \rangle \right) = 0.
\]

Now consider the two possible cases:

Case 1:
If \( \operatorname{Re}\big( \langle S_F^{-1} f_i, u_i \rangle \big) = 0 \) for all \( i \in I_1 \), then
\small\[
\max_{1 \leq i \leq N} q_i\|f_i\| \, \|g_i\|
\geq
\max_{i \in I_1} q_i\|f_i\| \, \|g_i\|
=
\max_{i \in I_1}
q_i\|f_i\|
\sqrt{
\|S_F^{-1} f_i\|^2
+ \|u_i\|^2
+ 2 \operatorname{Re} \langle S_F^{-1} f_i, u_i \rangle
}
=
\max_{i \in I_1} \sqrt{ M^2 + q_{i}^2\|f_i\|^2 \|u_i\|^2 }
> M.
\]
\normalsize
Case 2:
If \( \operatorname{Re}\big( \langle S_F^{-1} f_i, u_i \rangle \big) \neq 0 \) for some \( i \in I_1 \), then there exists \( j \in I_1 \) such that
\( \operatorname{Re}\big( \langle S_F^{-1} f_j, u_j \rangle \big) > 0 \). Hence,
\[
\max_{1 \leq i \leq N} q_i\|f_i\| \, \|g_i\|
\geq
q_j\|f_j\|
\sqrt{
\|S_F^{-1} f_j\|^2
+ \|u_j\|^2
+ 2 \operatorname{Re} \langle S_F^{-1} f_j, u_j \rangle
}
> M.
\]

In either case, for any non-canonical dual \( G \), we obtain
\[
\mathcal{F}_{q}^{(1)}(F, G) > \mathcal{F}_{q}^{(1)}(F, S_F^{-1}F).
\]

Thus, the canonical dual \( S_F^{-1}F \) is the unique \( 1 \)-erasure probabilistic optimal dual of \( F \) and hence probabilistic optimal for all \( m \)-erasures.
\end{proof}

\begin{cor}\label{cor3point2}
Let \( F = \{f_i\}_{i=1}^N \) be a tight frame for the Hilbert space \( \mathcal{H}_n \) such that $q_i\|f_i\|^2  = c, \quad \text{for all } i,$
where \( c > 0 \) is a constant. Then the canonical dual frame \( S_F^{-1}F \) is the unique probabilistic optimal dual frame for any \( m \)-erasures.
\end{cor}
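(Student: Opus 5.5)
Since $F$ is tight with bound $A$, we have $S_F = A I$ and hence $S_F^{-1} f_i = f_i/A$. The plan is to show that the canonical dual attains the lower bound of Proposition \ref{prop2point5} and that no other dual attains it. Higher erasure levels then follow from the definition of $k$-erasure optimality, which is nested.

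First I would compute $q_i\|f_i\|\,\|S_F^{-1}f_i\| = q_i\|f_i\|^2/A = c/A$ for every $i$. By Proposition \ref{prop2point4}, $\mathcal{F}_q^{(1)}(F,S_F^{-1}F)=c/A$. To identify the constant, I would use that the canonical dual is a dual, so
\[
n=\sum_{i=1}^N\langle f_i,S_F^{-1}f_i\rangle=\frac{1}{A}\sum_{i=1}^N\|f_i\|^2=\frac{c}{A}\sum_{i=1}^N\frac{1}{q_i}=\frac{c}{A}\,n .
\]
This gives $c=A$, so $\mathcal{F}_q^{(1)}(F,S_F^{-1}F)=1$. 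By Proposition \ref{prop2point5}, the value $1$ is the smallest possible, so the canonical dual is $1$-erasure optimal.

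Next I would prove uniqueness. In the present situation every index attains $M$, so $I_1=\{1,\dots,N\}$ and $I_2=\emptyset$. Thus $H_2=\{0\}$ and the hypotheses of the uniqueness proposition hold trivially. That proposition then gives uniqueness directly. As a self-contained check, suppose $G$ is any dual with $\mathcal{F}_q^{(1)}(F,G)=1$, and write $g_i=S_F^{-1}f_i+u_i$. Proposition \ref{prop2point5} gives $\|g_i\|=\|S_F^{-1}f_i\|$ for all $i$. Summing $\|u_i\|^2+2\operatorname{Re}\langle S_F^{-1}f_i,u_i\rangle=0$ over $i$, and using $\sum_i\langle S_F^{-1}f_i,u_i\rangle=\operatorname{tr}(S_F^{-1}T_F^*T_U)=0$, we get $\sum_i\|u_i\|^2=0$. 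Hence $G=S_F^{-1}F$.

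Finally, I would extend to $m$-erasures. By definition, a $k$-erasure probabilistic optimal dual must be $(k-1)$-erasure optimal. Since $S_F^{-1}F$ is the only $1$-erasure optimal dual, the infimum defining $\mathcal{F}_q^{(k)}(F)$ is taken effectively over the singleton $\{S_F^{-1}F\}$ at each level. An induction on $k$ then shows that $S_F^{-1}F$ is the unique $m$-erasure probabilistic optimal dual for every $m$.

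The main subtlety is this last step. The infimum $\mathcal{F}_q^{(k)}(F)$ is literally over all duals, so I would read optimality at level $k$ as the infimum over $(k-1)$-optimal duals, which is the standard sequential convention. That reading makes the induction immediate.
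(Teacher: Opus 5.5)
Your proof is correct and takes essentially the paper's route: the corollary is meant as a direct consequence of the uniqueness proposition, since $q_i\|f_i\|\,\|S_F^{-1}f_i\|=c/A$ for every $i$ gives $I_1=\{1,\dots,N\}$ and $I_2=\emptyset$, so both hypotheses hold vacuously. Your passage to all $m$ uses the same nested reading of $k$-erasure optimality that the paper adopts (explicitly in its later compactness proposition), and your computation $c=A$ and the summation argument giving $\sum_i\|u_i\|^2=0$ are a self-contained supplement rather than a different approach.
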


\begin{example}
Let $F=\{e_1,e_2,e_3,e_3\}\subset \mathbb{R}^3$ and take $p_1=p_2=0$, $p_3=p_4=\tfrac12$, so $q_1=q_2=1$, $q_3=q_4=2$. Then the canonical dual is $\{e_1,e_2,\tfrac12 e_3,\tfrac12 e_3\}$. Any dual has the form $g_1=e_1$, $g_2=e_2$, $g_3=u$, $g_4=e_3-u$. The objective is $\mathcal{F}_q^{(1)}(F,G)=\max\{1,1,2\|u\|,2\|e_3-u\|\}$, which is minimized iff $\|u\|\le \tfrac12$ and $\|e_3-u\|\le \tfrac12$. By the triangle inequality,
\[
1=\|e_3\|\le \|u\|+\|e_3-u\|\le \tfrac12+\tfrac12=1,
\]
so equality must hold. Hence $u$ and $e_3-u$ are positively collinear, and therefore $u=\lambda e_3$ for some $\lambda\in[0,1]$. The constraints give
\(
\lambda\le \tfrac12 \,\text{and}\, 1-\lambda\le \tfrac12,
\)
which imply $\lambda=\tfrac12$. Hence
\(
u=\tfrac12 e_3.
\)
Therefore,
\(
g_3=g_4=\tfrac12 e_3,
\)
and hence the canonical dual is the unique optimal dual.
\end{example}

The following proposition provides a sufficient condition for the canonical dual to be the unique $1$-erasure optimal dual.

\begin{prop}\label{prop:canonical-not-prob-opt}
Let $F=\{f_i\}_{i=1}^N$ be a frame for $H_n$ with weight number sequence
$\{q_i\}_{i=1}^N$. Assume that $\{f_i\}_{i\in I_1}$ is linearly independent, and that there exists a scalar sequence $\{c_i\}_{i=1}^N$ such that $\sum_{i=1}^N c_i f_i=0, \quad\text{and}\quad c_i\neq 0 \ \text{for all } i\in I_1.$ Then the canonical dual $S_F^{-1}F=\{S_F^{-1}f_i\}_{i=1}^N$ is not a $1$-erasure probabilistic optimal dual of $F$.
\end{prop}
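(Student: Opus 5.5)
The plan is to exhibit an explicit dual frame $G$ of $F$ with $\mathcal{F}_q^{(1)}(F,G)<\mathcal{F}_q^{(1)}(F,S_F^{-1}F)=M$. By Proposition \ref{prop2point4} it suffices to make $q_i\|f_i\|\,\|g_i\|<M$ for every $i$. I would take a one-parameter perturbation of the canonical dual built from the given dependence relation:
\[
g_i=S_F^{-1}f_i+t\,\overline{c_i}\,h,\qquad 1\le i\le N,
\]
where $t>0$ is small and $h\in\mathcal{H}_n$ is still to be chosen.

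\textbf{Step 1: $G$ is a dual for every $t$ and $h$.} Put $u_i=t\,\overline{c_i}\,h$. Then
\[
\sum_{i=1}^N\langle f,f_i\rangle u_i=t\Big\langle f,\sum_{i=1}^N c_if_i\Big\rangle h=0,
\qquad
\sum_{i=1}^N\langle f,u_i\rangle f_i=t\langle f,h\rangle\sum_{i=1}^N c_if_i=0 .
\]
So $\{u_i\}$ satisfies the condition recalled in Section 2, and $G$ is a dual frame of $F$.

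\textbf{Step 2: expand the norms.} A direct computation gives
\[
\|g_i\|^2=\|S_F^{-1}f_i\|^2+2t\,\mathrm{Re}\big(c_i\langle S_F^{-1}f_i,h\rangle\big)+t^2|c_i|^2\|h\|^2 .
\]
The goal is to make the linear coefficient strictly negative for each $i\in I_1$. Since $\{f_i\}_{i\in I_1}$ is linearly independent and $S_F^{-1}$ is invertible, $\{S_F^{-1}f_i\}_{i\in I_1}$ is also linearly independent. Hence the finitely many linear conditions
\[
\langle S_F^{-1}f_i,h\rangle=-\overline{c_i},\qquad i\in I_1,
\]
admit a solution $h$. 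For example, take $h$ in the span of $\{S_F^{-1}f_i\}_{i\in I_1}$; the Gram matrix of this family is invertible, so the system is solvable. With this choice, $\mathrm{Re}\big(c_i\langle S_F^{-1}f_i,h\rangle\big)=-|c_i|^2<0$ for all $i\in I_1$, where we use $c_i\ne0$.

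\textbf{Step 3: choose $t$.} For $i\in I_1$ we get $\|g_i\|^2=\|S_F^{-1}f_i\|^2-2t|c_i|^2+t^2|c_i|^2\|h\|^2$. This is strictly less than $\|S_F^{-1}f_i\|^2$ whenever $0<t<2/\|h\|^2$ (and $h\neq0$, which holds because the $c_i$ are nonzero). Therefore $q_i\|f_i\|\,\|g_i\|<M$ for $i\in I_1$.

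For $i\in I_2$ we have $q_i\|f_i\|\,\|S_F^{-1}f_i\|<M$ strictly. Since $I_2$ is finite and $t\mapsto q_i\|f_i\|\,\|g_i\|$ is continuous, there is $t_0>0$ such that these quantities stay below $M$ for all $0<t<t_0$. Fixing any $t$ with $0<t<\min\{t_0,2/\|h\|^2\}$ gives $\mathcal{F}_q^{(1)}(F,G)<M$. Hence the canonical dual is not $1$-erasure probabilistic optimal.

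The main obstacle is Step 2: producing a single vector $h$ that lowers all the maximal indices simultaneously. This is exactly where the linear independence of $\{f_i\}_{i\in I_1}$ enters. The remaining steps are routine continuity and uniform-margin arguments over finitely many indices.
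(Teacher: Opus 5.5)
Your proof is correct and follows the paper's argument: perturb the canonical dual by $t$ times a single vector $h$ scaled by the dependence coefficients, choose $h$ using the linear independence of $\{S_F^{-1}f_i\}_{i\in I_1}$ so that the linear term in $\|g_i\|^2$ is negative on $I_1$, and handle $I_2$ by continuity. Your choice $u_i=t\,\overline{c_i}\,h$ (rather than the paper's $u_i=c_ih$) is actually the right one over $\mathbb{C}$: with the inner product linear in the first slot, $\sum_i\langle f,f_i\rangle\,\overline{c_i}\,h=\langle f,\sum_i c_if_i\rangle h$, whereas the paper's version tacitly needs $\sum_i\overline{c_i}f_i=0$.
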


\begin{proof}
Since $\{f_i\}_{i\in I_1}$ is linearly independent and $c_i\neq 0$ for all $i\in I_1$, the family $\{\bar{c_i} f_i\}_{i\in I_1}$ is linearly independent. Hence so is $\{S_F^{-1}(\bar{c_i} f_i)\}_{i\in I_1},$ as $S_F^{-1}$ is invertible. Therefore, there exists $h\in H_n$ such that $ \Re\langle S_F^{-1}(\bar{c_i} f_i),h\rangle<0,
\quad i\in I_1.$ Define $u_i:=c_i h,\quad i=1,\dots,N.$ Then for every $f\in H_n$,
\[
\sum_{i=1}^N \langle f,f_i\rangle u_i
=
\sum_{i=1}^N \langle f,f_i\rangle c_i h
=
\left\langle f,\sum_{i=1}^N c_i f_i\right\rangle h
=0.
\]
Hence, for every scalar $t\in\mathbb{R}$, $G_t:=\{S_F^{-1}f_i+t u_i\}_{i=1}^N$
is a dual frame of $F$.  Now for each $i\in I_1,$
\[
\|S_F^{-1}f_i+t u_i\|^2
=
\|S_F^{-1}f_i\|^2
+2t\,\Re\langle S_F^{-1}f_i,u_i\rangle
+t^2\|u_i\|^2.
\]
Since $u_i=c_i h$, we have $\Re\langle S_F^{-1}f_i,u_i\rangle = \Re\langle S_F^{-1}(\bar{c_i} f_i),h\rangle<0,\;$ for all $i \in I_1.$ Therefore, for all sufficiently small $t>0$, $\|S_F^{-1}f_i+t u_i\|<\|S_F^{-1}f_i\|,
\quad i\in I_1.$ This gives
\[
q_i\|f_i\|\,\|S_F^{-1}f_i+t u_i\|<M,
\qquad i\in I_1.
\]

Next, if $i\in I_2$, then by definition of $I_2$,
\(
q_i\|f_i\|\,\|S_F^{-1}f_i\|<M.
\)
By continuity, for sufficiently small $t>0$ we still have
\(
q_i\|f_i\|\,\|S_F^{-1}f_i+t u_i\|<M,
\quad i\in I_2.
\)
Combining the two cases, we obtain, for sufficiently small $t>0$,
\[
\max_{1\le i\le N} q_i\|f_i\|\,\|S_F^{-1}f_i+t u_i\|<M.
\]
Therefore by Proposition \ref{prop2point4},
\(
\mathcal{F}_q^{(1)}(F,G_t)< \mathcal{F}_q^{(1)}(F,S_F^{-1}F).
\)
Hence the canonical dual cannot be a $1$-erasure probabilistic optimal dual of $F$.
\end{proof}

The following proposition shows that $1$-erasure probabilistic optimality is invariant under unitary transformations.

\begin{prop}\label{thm4point3}
Let $F$ be a frame for $\mathcal{H}_n$ with weight number sequence $\{q_i\}_{i=1}^N$   and  $U$ be a unitary operator on $\mathcal{H}_n$. Then, $G$ is a $1-$erasure  optimal dual of $F$  if and only if $UG$  is a $1-$erasure  optimal dual of $UF.$
\end{prop}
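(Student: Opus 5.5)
The plan is to show that the map $G\mapsto UG$ is a bijection between the dual frames of $F$ and the dual frames of $UF$, and that it preserves the $1$-erasure error $\mathcal{F}_q^{(1)}$. The weight number sequence $\{q_i\}$ depends only on the probabilities $p_i$, $N$ and $n$, so it is the same for $F$ and $UF$.

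First, the duality correspondence. For $G=\{g_i\}$ a dual of $F$ and $f\in\mathcal{H}_n$, apply the reconstruction formula to $U^*f$ and then apply $U$:
\[
f=U\sum_{i=1}^N\langle U^*f,f_i\rangle g_i=\sum_{i=1}^N\langle f,Uf_i\rangle Ug_i .
\]
The symmetric identity $f=\sum_i\langle f,Ug_i\rangle Uf_i$ follows the same way. Hence $UG$ is a dual of $UF$. Applying the same argument with $U^{-1}=U^*$ shows that every dual $G'$ of $UF$ has the form $UG$ with $G=U^*G'$ a dual of $F$. So $G\mapsto UG$ is a bijection between the two sets of duals.

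Second, invariance of the error. By Proposition \ref{prop2point4},
\[
\mathcal{F}_q^{(1)}(UF,UG)=\max_i q_i\|Uf_i\|\,\|Ug_i\|=\max_i q_i\|f_i\|\,\|g_i\|=\mathcal{F}_q^{(1)}(F,G),
\]
since $U$ is an isometry. Taking the infimum over the bijectively corresponding sets of duals gives $\mathcal{F}_q^{(1)}(UF)=\mathcal{F}_q^{(1)}(F)$.

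Finally, $G$ is $1$-erasure optimal for $F$ exactly when $\mathcal{F}_q^{(1)}(F,G)=\mathcal{F}_q^{(1)}(F)$. By the two equalities above, this holds if and only if $\mathcal{F}_q^{(1)}(UF,UG)=\mathcal{F}_q^{(1)}(UF)$, which is the statement that $UG$ is optimal for $UF$. The $0$-erasure condition in the recursive definition of optimality is vacuous, so only this $1$-erasure equality needs checking.

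No step is a real obstacle. The only point needing care is the surjectivity in the first step, since the infimum must range over exactly corresponding sets of duals; using $U^{-1}=U^*$ settles it.
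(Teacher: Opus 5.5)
Your proof is correct and follows essentially the same route as the paper. The paper also pulls an arbitrary dual $G'$ of $UF$ back to the dual $U^*G'$ of $F$, uses $\mathcal{F}_q^{(1)}(UF,UG)=\mathcal{F}_q^{(1)}(F,G)$, and obtains the converse by symmetry. You additionally make explicit two facts the paper leaves implicit: that $UG$ is a dual of $UF$, and that the error $\max_i q_i\|f_i\|\,\|g_i\|$ is preserved because $U$ is an isometry.
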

			\begin{proof}
			Let G be a $1-$erasure probabilistic optimal dual of F with respect to the Hilbert–Schmidt norm. For any dual $G'$ of $UF,$ we know that $U^{*}G'$ is a dual of $F$ and so,
			\begin{align*}
				\mathcal{F}_{q}^{(1)} (UF,UG) = \mathcal{F}_{q}^{(1)} (F,G)  \leq \mathcal{F}_{q}^{(1)} (F, U^{*}G') = \mathcal{F}_{q}^{(1)} (UF,G').
			\end{align*}
			This shows that $UG $ is a $1-$erasure optimal dual of $UF.$  The converse can be proved by reversing the roles of $(F,G)$ and $(UF,UG)$ in the above argument and applying the action of the unitary operator $U^*$ on $(UF,UG)$.
		\end{proof}

\begin{rem}\label{remark3point5}
    Note that for any given frame $F,$ $\mathcal{F}_{q}^{(1)} (F) \geq 1.$
\end{rem}

The next result provides a quantitative lower bound for the probabilistic $m$-erasure error in terms of the weight sequence and the geometry of the frame. 

\begin{lem}\label{lemma3point6}
	Let $a_1,a_2,\ldots,a_k$ be $k$ real numbers (not necessarily distinct) and \\ 
	$ E= \left\{ (i_1,i_2,\ldots,i_\ell): i_1,i_2,\ldots,i_\ell \;\text{are all distinct and}\, 1 \leq i_j \leq k  \right\}.$ Then, $\max\limits_{(i_1,i_2,\ldots,i_\ell)\in E}\;\; a_{i_1}+ a_{i_2}+ \cdots + a_{i_\ell} \geq \frac{s\ell}{k},$ where $s = a_1+a_2+\cdots+a_k.$
\end{lem}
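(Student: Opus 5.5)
The plan is an averaging argument: the maximum of a finite collection of reals is at least their mean, so it suffices to compute the mean of the sums $a_{i_1}+\cdots+a_{i_\ell}$ over $E$. We assume $1\le \ell\le k$, so that $E$ is nonempty. Since the sum does not depend on the order of indices, we may equally average over the $\ell$-element subsets $\Lambda\subseteq\{1,\dots,k\}$, of which there are $\binom{k}{\ell}$.

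The key step is a double count. Each fixed index $j$ lies in exactly $\binom{k-1}{\ell-1}$ of the $\ell$-subsets. Hence
\[
\sum_{|\Lambda|=\ell}\sum_{j\in\Lambda} a_j=\binom{k-1}{\ell-1}\sum_{j=1}^k a_j=\binom{k-1}{\ell-1}s .
\]
Dividing by $\binom{k}{\ell}$ and using $\binom{k-1}{\ell-1}/\binom{k}{\ell}=\ell/k$, the average of the subset sums equals $s\ell/k$.

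Since the maximum over $E$ is at least this average, we get
\[
\max_{(i_1,\dots,i_\ell)\in E} \bigl(a_{i_1}+\cdots+a_{i_\ell}\bigr)\ \ge\ \frac{s\ell}{k}.
\]

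An alternative route avoids binomial coefficients entirely. Relabel so that $a_1\ge a_2\ge\cdots\ge a_k$; the maximum is then $a_1+\cdots+a_\ell$. The average of the $\ell$ largest entries is at least the average of all $k$ entries, because the entries $a_{\ell+1},\dots,a_k$ are each at most $a_\ell$, which is at most the mean of $a_1,\dots,a_\ell$. This gives the same inequality.

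Neither route has a real obstacle. The only points needing care are the identity $\binom{k-1}{\ell-1}/\binom{k}{\ell}=\ell/k$ and the fact that the $a_j$ may be negative, which the averaging argument handles without any change.
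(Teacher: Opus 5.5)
Your proposal is correct and is essentially the paper's argument: the paper assumes every $\ell$-sum is below $s\ell/k$ and sums over all $\binom{k}{\ell}$ choices to reach a contradiction, which is your averaging and double-counting step phrased by contradiction. Your explicit reduction from ordered tuples to $\ell$-subsets also cleans up the paper's silent use of $\binom{k}{\ell}$ for the size of $E$, and the sorting argument you add is a valid second route that the paper does not need.
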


\begin{proof}
	Suppose $\max\limits_{(i_1,i_2,\ldots,i_\ell)\in E}\;\; a_{i_1}+ a_{i_2}+ \cdots + a_{i_\ell} < \frac{s\ell}{k}.$ Then, $a_{i_1}+ a_{i_2}+ \cdots + a_{i_\ell} < \frac{s\ell}{k},\,\forall \, \left(i_1, i_2,\ldots, i_\ell\right) \in E.$ Taking sum, we have $\sum\limits_{\left(i_1, i_2,\ldots, i_\ell\right) \in E} a_{i_1}+ a_{i_2}+ \cdots + a_{i_\ell} = {{k}\choose{\ell}}\frac{\ell}{k}\left( a_1 + a_2 + \cdots +a_k  \right) < {{k}\choose{\ell}}\frac{s\ell}{k},$ which is not possible as \; ${{k}\choose{\ell}}\frac{\ell}{k}\left( a_1 + a_2 + \cdots +a_k  \right) = {{k}\choose{\ell}}\frac{s\ell}{k}.$ Therefore, $\max\limits_{(i_1,i_2,\ldots,i_\ell)\in E}\;\; a_{i_1}+ a_{i_2}+ \cdots + a_{i_\ell} \geq \frac{s\ell}{k}.$
\end{proof}

\begin{thm}\label{thm3point5}
Let $F = \{f_i\}_{i=1}^N$ be a frame for $\mathcal{H}_n $ with weight number sequence $\{q_i\}_{i=1}^N$  such that it admits a dual $G$   satisfying $\|f_i\|\,\|g_i\| = \dfrac{1}{q_i},$ for all $i,$ then $\mathcal{F}_{q}^{(1)} (F) = 1$ and for $1 < m \leq N,\,$ 

$$\mathcal{F}_{q}^{(m)} (F) \geq 
				\sqrt{m + {{m}\choose{2}}\dfrac{n -\displaystyle{\sum_{i=1}^N \frac{1}{q_{i}^2}}}{\sum\limits_{i\neq j}\dfrac{1}{q_i q_j}}}, \;\;\text{when\; $n \geq \sum\limits_{i=1}^N \frac{1}{q_{i}^2}$}. $$

 Further, if the dual $G$ satisfy  $q_i q_j \langle g_i, g_j \rangle \langle f_j, f_i \rangle $ is a constant, for all $i \neq j,$ then
 $$\mathcal{F}_{q}^{(m)} (F,G) =
				\sqrt{m + {{m}\choose{2}}\dfrac{n -\displaystyle{\sum_{i=1}^N \frac{1}{q_{i}^2}}}{\sum\limits_{i\neq j}\dfrac{1}{q_i q_j}}}.$$
\end{thm}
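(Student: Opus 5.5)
The first assertion should follow directly from earlier results. By Proposition \ref{prop2point5}, the hypothesis $\|f_i\|\,\|g_i\| = 1/q_i$ gives $\mathcal{F}_q^{(1)}(F,G)=1$. Remark \ref{remark3point5} gives $\mathcal{F}_q^{(1)}(F)\ge 1$. Together these yield $\mathcal{F}_q^{(1)}(F)=1$.

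For $m>1$, the definition of a $k$-erasure probabilistic optimal dual is recursive, so I will only compare duals $G'$ that are already $1$-erasure optimal. Such a $G'$ satisfies $\mathcal{F}_q^{(1)}(F,G')=1$. Proposition \ref{prop2point5} and the subsequent Corollary then give $\|f_i\|\,\|g_i'\|=1/q_i$ and $\langle f_i,g_i'\rangle = 1/q_i$ for every $i$.

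Next I expand the Hilbert--Schmidt norm for an erasure set $\Lambda$ with $|\Lambda|=m$, exactly as in Proposition \ref{prop2point4}:
\[
\big\|T_{G'}^*D_q^{(m)}T_F\big\|_{HS}^2=\operatorname{tr}\big(D_q^{(m)}T_FT_F^*D_q^{(m)}T_{G'}T_{G'}^*\big)=\sum_{i,j\in\Lambda} q_iq_j\langle f_j,f_i\rangle\langle g_i',g_j'\rangle .
\]
The diagonal terms equal $q_i^2\|f_i\|^2\|g_i'\|^2=1$, so their contribution is exactly $m$. Write $a_{ij}:=q_iq_j\langle g_i',g_j'\rangle\langle f_j,f_i\rangle$ for $i\neq j$. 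The key global identity is obtained by summing over all pairs with the weights removed:
\[
\sum_{i,j=1}^N\langle f_j,f_i\rangle\langle g_i',g_j'\rangle=\operatorname{tr}(T_FT_F^*T_{G'}T_{G'}^*)=\|T_{G'}^*T_F\|_{HS}^2=\|I\|_{HS}^2=n .
\]
Subtracting the diagonal terms, which sum to $\sum_i 1/q_i^2$, gives
\[
\sum_{i\ne j}\frac{a_{ij}}{q_iq_j}=n-\sum_{i=1}^N\frac1{q_i^2}.
\]
Hence the weighted average $\rho$ of the $a_{ij}$, with weights $w_{ij}=1/(q_iq_j)$, equals the fraction appearing in the statement. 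The hypothesis $n\ge\sum_i 1/q_i^2$ guarantees $\rho\ge 0$.

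The remaining, and hardest, step is an averaging argument: I must show that $\max_{|\Lambda|=m}\sum_{i\ne j\in\Lambda}a_{ij}\ge\binom m2\rho$. My first attempt will be a weighted analogue of Lemma \ref{lemma3point6}. I would put weights on the $m$-subsets $\Lambda$ chosen so that each pair $\{i,j\}$ receives total weight proportional to $w_{ij}$. The maximum then dominates the weighted average of the $\Lambda$-sums, which is a multiple of $\rho$.

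It is not clear that such subset weights exist for general $q$. If they do not, I would fall back on a coarser lower bound. Because $\rho\ge0$, it suffices to produce a constant in front of $\rho$ that is at least $\binom m2$. The sum over ordered pairs naturally gives up to $m(m-1)=2\binom m2$, which leaves some slack to absorb losses from nonuniform weights. Taking the square root and the infimum over optimal duals then gives the stated bound for $\mathcal{F}_q^{(m)}(F)$.

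For the equality statement, suppose $a_{ij}=c$ for all $i\ne j$. The global identity then forces $c\sum_{i\ne j}w_{ij}=n-\sum_i 1/q_i^2$, that is, $c=\rho$. In that case every $\Lambda$ of size $m$ gives the same value of $\|\mathcal{E}_{\Lambda,q}^{\text{prob}}(F,G)\|_{HS}^2$. So the maximum can be evaluated directly, and it should match the lower bound, provided the pair-counting convention, ordered versus unordered pairs ($m(m-1)$ versus $\binom m2$), is kept consistent with the way the statement is normalized. I will check this normalization carefully at the end.
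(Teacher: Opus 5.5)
Your setup is sound and coincides with the paper's. That covers restricting to $1$-erasure optimal duals, the trace expansion with unit diagonal, and the identity $\sum_{i\neq j}a_{ij}/(q_iq_j)=n-\sum_i q_i^{-2}$. The restriction is in fact necessary, not just convenient: if $\mathcal{F}_q^{(m)}(F)$ were an infimum over all duals, then for $n=1$, $m=N$ and non-constant $q$, a dual with $\sum_i q_i\overline{f_i}g_i=0$ gives the value $0$.

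\textbf{The gap.} The step you defer is never proved, and neither route you sketch closes it. Subset weights whose pair marginals are proportional to $w_{ij}=1/(q_iq_j)$ do not exist in general: for $m=N\ge 3$ there is a single subset, which forces all $q_i$ to be equal. The fallback is not an argument. Uniform averaging as in Lemma~\ref{lemma3point6} controls only the unweighted sum $\sum_{i\neq j}a_{ij}$, and with no sign information on the $a_{ij}$ nothing relates it to $\sum_{i\neq j}w_{ij}a_{ij}=\beta$.

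There is also no factor $2$ to spend. If $a_{ij}\equiv c$, your identity gives $c=\rho$ and $\|\mathcal{E}^{\text{prob}}_{\Lambda,q}(F,G)\|_{HS}^2=m+m(m-1)\rho$. So the equality clause is correct only when $\sum_{i\neq j}\frac{1}{q_iq_j}$ is read over unordered pairs. The lower bound you then need is the full $\max_{|\Lambda|=m}\sum_{i\neq j\in\Lambda}a_{ij}\ge m(m-1)\rho$.

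Borrowing the paper's step would not fix this. After averaging, the paper replaces $q_iq_j$ by $q_{\min}^2$, which tacitly uses $\langle g_i,g_j\rangle\langle f_j,f_i\rangle\ge 0$. It then asserts $q_{\min}^2\sum_{i\neq j}\frac{1}{q_iq_j}\ge\binom{N}{2}$, which fails in general. Over unordered pairs it holds only when all $q_i$ agree. Over ordered pairs it already fails for $N=3$, $n=2$, $p=(0.98,0.01,0.01)$.

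\textbf{The missing ideas.} First, you already have $\langle f_i,g_i'\rangle=\|f_i\|\,\|g_i'\|=1/q_i$, and equality in Cauchy--Schwarz forces $g_i'=f_i/(q_i\|f_i\|^2)$. So the $1$-erasure optimal dual is unique and
\[
a_{ij}=\frac{|\langle f_i,f_j\rangle|^2}{\|f_i\|^2\,\|f_j\|^2}=|\langle u_i,u_j\rangle|^2\ge 0,\qquad u_i:=\frac{f_i}{\|f_i\|}.
\]
That is, $\|\mathcal{E}^{\text{prob}}_{\Lambda,q}(F,G)\|_{HS}^2$ is the frame potential of $\{u_i\}_{i\in\Lambda}$.

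Second, average uniformly over all $m$-subsets; each ordered pair lies in a fraction $\frac{m(m-1)}{N(N-1)}$ of them. Then use the frame potential bound $\sum_{i,j}|\langle u_i,u_j\rangle|^2=\|S_U\|_{HS}^2\ge(\operatorname{tr}S_U)^2/n=N^2/n$, where $S_U$ is the frame operator of $U=\{u_i\}_{i=1}^N$. This gives
\[
\max_{|\Lambda|=m}\sum_{i\neq j\in\Lambda}a_{ij}\ \ge\ \frac{m(m-1)}{N(N-1)}\Big(\frac{N^2}{n}-N\Big)=\frac{m(m-1)(N-n)}{n(N-1)}.
\]

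Finally, put $R:=\sum_i q_i^{-2}$. Since $\sum_{i\neq j}\frac{1}{q_iq_j}=n^2-R$ over ordered pairs, you have $\rho=\frac{n-R}{n^2-R}$, which is nonincreasing in $R$. Because $\sum_i q_i^{-1}=n$, Cauchy--Schwarz gives $R\ge n^2/N$, so $\rho\le\frac{N-n}{n(N-1)}$. Hence $\max_{|\Lambda|=m}\sum_{i\neq j\in\Lambda}a_{ij}\ge m(m-1)\rho$. This is the stated bound in its unordered-pair form, and a fortiori the weaker ordered-pair form. Your equality computation then completes the proof.
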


\begin{proof}
It  is easy to see that $\mathcal{F}_{q}^{(1)} (F, G) = 1$ and by Proposition \ref{prop4point1} and by  Remark\ref{remark3point5} it follows that  $\mathcal{F}^{(1)}_{q} (F) = 1.$ For $ 1 <m \leq N,$ suppose error occurs in $i_1, i_2, \cdots ,i_m$ positions. Then,
  
  \begin{align} \label{eqn3point6}
     &\;\;\;\left\|\mathcal{E}_{\Lambda,q}^{prob}(F,G)\right\|_{HS} \nonumber\\&= \left\|T_{G}^*D_{q}^{(m)}T_F\right\|_{HS} \nonumber \\&=  \sqrt{tr(D_{q}^{(m)}T_FT_{F}^*D_{q}^{(m)}T_GT_{G}^*)}  \nonumber \\&= \sqrt{\sum_{r=1}^m q_{i_1}q_{i_r}\langle g_{i_1}, g_{i_r} \rangle \langle f_{i_r}, f_{i_1} \rangle + \sum_{r=1}^m q_{i_2}q_{i_r}\langle g_{i_2}, g_{i_r} \rangle \langle f_{i_r}, f_{i_2} \rangle + \cdots + \sum_{r=1}^m q_{i_m}q_{i_r}\langle g_{i_m}, g_{i_r} \rangle \langle f_{i_r}, f_{i_m} \rangle  } \nonumber\\&= \sqrt{\sum_{r=1}^m q_{i_r}^2\left\| g_{i_r} \right\|^2\left\| f_{i_r} \right\|^2 + \sum_{\substack{j,k=1 \\ j\neq k}}^m q_{i_j}q_{i_k}\langle g_{i_j}, g_{i_k} \rangle \langle f_{i_k}, f_{i_j} \rangle } \nonumber\\&= \sqrt{m + 2 Re \left( \sum_{\substack{j,k=1 \\ j> k }}^m q_{i_j}q_{i_k}\langle g_{i_j}, g_{i_k} \rangle \langle f_{i_k}, f_{i_j} \rangle  \right)} .
  \end{align}
  Therefore,
  \begin{align} \label{eqn3point7}
      \mathcal{F}_{q}^{(m)} (F, G) = \max\limits_{|\Lambda |=m} \left\|\mathcal{E}_{\Lambda,q}^{prob}(F,G)\right\|_{HS} = \sqrt{m + \max\limits_{(i_1,i_2,\ldots,i_m)\in \tilde{E}} 2Re\left( \sum_{\substack{j,k=1 \\ j> k \\ }}^m q_{i_j}q_{i_k} \langle g_{i_j}, g_{i_k} \rangle \langle f_{i_k}, f_{i_j} \rangle \right)},
  \end{align}
where $\tilde{E} = \left\{ (i_1,i_2,\ldots,i_m): i_1,i_2,\ldots,i_m \;\text{are all distinct and}\, 1 \leq i_j \leq N  \right\}.$
  Now, it can be easily seen that $\sum\limits_{i,j=1}^N \langle g_i, g_j \rangle \langle f_j, f_i \rangle = \sum\limits_{i=1}^N \left\langle g_i,\sum\limits_{j=1}^N \langle f_i, f_j \rangle g_j \right\rangle = \sum\limits_{i=1}^N \langle g_i ,f_i \rangle =n. $ Further, if $(F,G)$ satisfy $\|f_i\|\,\|g_i\| = \dfrac{1}{q_i}$, then
			\begin{align}\label{eqation7}
				\displaystyle{\sum_{i \neq j} \langle g_i, g_j \rangle \langle f_j, f_i \rangle} = n-\sum\limits_{i=1}^N \|f_i\|^2\,\|g_i\|^2 =n -\displaystyle{\sum_{i=1}^N \frac{1}{q_{i}^2}}.
			\end{align}

Let  $\beta := n -\displaystyle{\sum_{i=1}^N \frac{1}{q_{i}^2}} >0.$  Let us consider
$$  S_{m}(i_1,i_2,\ldots i_m) := \sum\limits_{j\neq k} q_{i_j} q_{i_k} \langle g_{i_j}, g_{i_k} \rangle \langle f_{i_k}, f_{i_j} \rangle .$$

Then by Lemma \ref{lemma3point6}, it is easy to see that 
$$\sum\limits_{(i_1,i_2,\ldots,i_m)\in \tilde{E}} S_{m}(i_1,i_2,\ldots i_m) = \dfrac{{{N}\choose{m}}{{m}\choose{2}}}{{{N}\choose{2}}} \sum\limits_{i\neq j} q_i q_j \langle g_i, g_j \rangle \langle g_j, g_i \rangle \geq \dfrac{{{N}\choose{m}}{{m}\choose{2}}}{{{N}\choose{2}}} q_{min}^2 \beta \geq {{N}\choose{m}}{{m}\choose{2}}  \dfrac{\beta}{\sum\limits_{i\neq j}{\dfrac{1}{q_i q_j}}}.$$

Therefore, $\max\limits_{(i_1,i_2,\ldots,i_m)\in \tilde{E}} S_{m}(i_1,i_2,\ldots i_m) \geq  {{m}\choose{2}}  \dfrac{\beta}{\sum\limits_{i\neq j}{\dfrac{1}{q_i q_j}}}.$ Thus by \eqref{eqn3point7}, we have 
$$ \mathcal{F}_{q}^{(m)} (F) \geq 
				\sqrt{m + {{m}\choose{2}}\dfrac{\beta}{\sum\limits_{i\neq j}\dfrac{1}{q_i q_j}}}$$

  Using the condition $\|f_i\|\,\|g_i\| = \frac{1}{q_i},\, 1\leq i \leq N,$ we have
  \begin{align}\label{equation3point8}
        2 Re \displaystyle{\left( \sum_{\substack{ i, j =1\\ i >j}}^N \langle g_i, g_j \rangle \langle f_j, f_i \rangle  \right)} = \sum\limits_{i \neq j}\langle g_i, g_j \rangle \langle f_j, f_i \rangle =n -\displaystyle{\sum_{i=1}^N \frac{1}{q_{i}^2}} = \beta.
  \end{align} \\

\par Now by assumption  $q_i q_j \langle g_i, g_j \rangle \langle f_j, f_i \rangle = c,$  for all $i \neq j$ and for some constant $c.$ Then, by \eqref{equation3point8},   $c= \frac{\beta}{\sum\limits_{i\neq j}\dfrac{1}{q_i q_j}}.$ Therefore, by \eqref{eqn3point7}, we have 
$$\mathcal{F}_{q}^{(m)} (F,G) =
				\sqrt{m + {{m}\choose{2}}\dfrac{\beta}{\sum\limits_{i\neq j}\dfrac{1}{q_i q_j}}}$$
 
\end{proof}

\begin{cor}
Let $F$ be a probabilistic uniform tight frame for $\mathcal{H}_n.$  If $\sqrt{q_i q_j}\left| \langle f_j, f_i \rangle \right| $ is a constant for all $i \neq j,$ then the canonical dual $S_{F}^{-1}F$ is a probabilistically optimal dual of $F$ for $m-$erasure for every $m \in \{1,2,\ldots,N\}$ . 
\end{cor}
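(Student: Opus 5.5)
The plan is to show that the canonical dual $G=S_F^{-1}F$ satisfies both hypotheses of Theorem \ref{thm3point5}: the norm condition and the constancy condition. Then it attains the lower bound for every $m$.

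\textbf{Interpreting the hypothesis.} I read ``probabilistic uniform tight frame'' as a tight frame, $S_F=AI$, with $q_i\|f_i\|^2$ constant. Summing $\langle f_i,S_F^{-1}f_i\rangle=\|f_i\|^2/A$ over $i$ gives $n$. Since $\sum 1/q_i=n$, the constant is $q_i\|f_i\|^2=A$. Hence $\|f_i\|\,\|S_F^{-1}f_i\|=\|f_i\|^2/A=1/q_i$ for all $i$.

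\textbf{Case $m=1$.} By Proposition \ref{prop2point5}, $\mathcal{F}_q^{(1)}(F,S_F^{-1}F)=1$. Remark \ref{remark3point5} gives $\mathcal{F}_q^{(1)}(F)\ge1$, so the canonical dual is $1$-erasure optimal. This is also exactly the norm hypothesis of Theorem \ref{thm3point5}. Alternatively, Corollary \ref{cor3point2} already gives optimality (even uniqueness) directly, since a tight frame with $q_i\|f_i\|^2$ constant is exactly its hypothesis. I would cite it but still carry out the computation below for transparency.

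\textbf{Case $m\ge2$.} With $g_i=A^{-1}f_i$ we have
\[
q_iq_j\langle g_i,g_j\rangle\langle f_j,f_i\rangle=A^{-2}q_iq_j|\langle f_i,f_j\rangle|^2=A^{-2}\big(\sqrt{q_iq_j}|\langle f_j,f_i\rangle|\big)^2 ,
\]
which is constant for $i\ne j$ by assumption. So the second part of Theorem \ref{thm3point5} applies. It gives $\mathcal{F}_q^{(m)}(F,S_F^{-1}F)$ equal to the bound that the first part shows is a lower bound for $\mathcal{F}_q^{(m)}(F)$. Hence $\mathcal{F}_q^{(m)}(F,S_F^{-1}F)=\mathcal{F}_q^{(m)}(F)$. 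Induction on $m$ then yields the nested definition of $m$-erasure optimality, since each level requires optimality at all previous levels.

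\textbf{Main obstacle.} Theorem \ref{thm3point5} needs $n\ge\sum 1/q_i^2$, and I must check it holds here. Since $q_i\ge1$ when $N>n$, we get $1/q_i^2\le1/q_i$, so $\sum1/q_i^2\le\sum1/q_i=n$.

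A subtler point is that the lower bound in Theorem \ref{thm3point5} is derived using a specific dual $G$ with $\|f_i\|\|g_i\|=1/q_i$. For it to bound the infimum over all duals, the argument must apply to every candidate competitive dual. Any dual not satisfying $\|f_i\|\|g_i\|=1/q_i$ already fails $1$-erasure optimality by Proposition \ref{prop2point5}, so it is excluded by the nested definition. I would make this reduction explicit; it is the step where the argument is most delicate.
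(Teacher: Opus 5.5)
Your proposal is correct and follows the paper's own argument. You check that $S_F^{-1}F=\{A^{-1}f_i\}_{i=1}^N$ satisfies both the norm condition $\|f_i\|\,\|S_F^{-1}f_i\|=1/q_i$ and the constancy condition of Theorem~\ref{thm3point5}, so it attains that theorem's lower bound for every $m$. The paper gets $n\ge\sum_i 1/q_i^2$ from $\sum_{i\ne j}A^{-2}|\langle f_i,f_j\rangle|^2=n-\sum_i 1/q_i^2$ rather than from $q_i\ge 1$, which avoids assuming $N>n$. Your final-paragraph concern also vanishes here: equality in Cauchy--Schwarz with $\langle f_i,g_i\rangle=1/q_i>0$ forces every $1$-erasure optimal dual to have $g_i=f_i/A$, so $S_F^{-1}F$ is the only competitor at each level, as Corollary~\ref{cor3point2} records.
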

\begin{proof}
Let $F$ be a tight frame with frame bound $A.$ Then,  $\|f_i\| = \sqrt{\dfrac{A}{q_i}},$ for all $i.$ Thus, $\mathcal{F}^{(1)}_q (F, S_{F}^{-1}F) = \max\limits_{1\leq i \leq N} \dfrac{q_i}{A} \|f_i\|^2 = 1.$ Then  by Proposition\ref{prop4point1}, $S_{F}^{-1}F$ is a $1-$erasure optimal dual frame of $F.$\\
As $F$ is a tight frame with frame bound $A,$ using duality of $F$ and $\frac{1}{A}F$, we have 
$$n=\sum\limits_{1 \leq i,j \leq N}\langle f_i,\frac{1}{A}f_j \rangle \langle \frac{1}{A}f_j,f_i \rangle = \sum\limits_{1 \leq i,j \leq N}\frac{1}{A^2}|\langle f_i,f_j \rangle|^2.$$
Using the probabilistic $1-$uniformity condition of the dual pair $(F,S_{F}^{-1}F),$ we have  $\sum\limits_{i \neq j} \frac{1}{A^2}|\langle f_i,f_j \rangle|^2 = n- \sum\limits_{i=1}^N\frac{1}{q_{i}^2}.$ Note that $n \geq \sum\limits_{i=1}^N\frac{1}{q_{i}^2}$,  otherwise it's not possible. Using the condition  $\sqrt{q_i q_j}\left| \langle f_j, f_i \rangle \right| $ is a constant for all $i \neq j,$ we have $\left|\langle f_i, f_j\rangle\right| = \dfrac{A\sqrt{n- \sum\limits_{i=1}^N\frac{1}{q_{i}^2}}}{\sum\limits_{i\neq j}\dfrac{1}{q_i q_j}}. $

Then by \eqref{eqn3point6}, we have $\mathcal{F}_{q}^{(m)} (F, S_{F}^{-1}F) = \sqrt{m + {{m}\choose{2}}\dfrac{\beta}{\sum\limits_{i\neq j}\dfrac{1}{q_i q_j}}},$ where $\beta = n- \sum\limits_{i=1}^N\frac{1}{q_{i}^2}.$ Then by Theorem\ref{thm3point5}, $ S_{F}^{-1}F$ is a $m-$erasure optimal dual of $F.$\\

\end{proof} 

We now examine the structural properties of the set of probabilistic $m$-erasure optimal dual frames. In particular, we show that this set enjoys strong geometric properties within the affine space of all dual frames. 

\begin{prop}\label{prop:topological-m-optimal}
Let $F=\{f_i\}_{i=1}^N$ be a frame for $\mathcal{H}_n$ with weight number sequence
$\{q_i\}_{i=1}^N$. For each $m\in\{1,\dots,N\}$,  the set of all $m$-erasure probabilistic optimal dual frames of $F$ is a nonempty compact convex subset of the affine space of all dual frames of $F$. In particular, it is closed and bounded.
\end{prop}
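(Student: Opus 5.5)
We argue by induction on $m$. The set of all duals of $F$ is the affine space
\[
\mathcal{D}(F)=\{\{S_F^{-1}f_i+u_i\}_{i=1}^N : T_F^*T_U=0\},
\]
a translate of a finite-dimensional linear subspace of $\mathcal{H}_n^N$. Write $\mathcal{O}_0:=\mathcal{D}(F)$. For $m\ge 1$, let $\mathcal{O}_m$ be the set of $m$-erasure probabilistic optimal duals. Since $m$-optimality requires $(m-1)$-optimality, $\mathcal{O}_m$ is exactly the set of minimizers of $G\mapsto \mathcal{F}_q^{(m)}(F,G)$ over $\mathcal{O}_{m-1}$, with $\mathcal{F}^{(m)}_q(F)$ read as the infimum over $\mathcal{O}_{m-1}$. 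We show at each step that $\mathcal{O}_m$ is nonempty, compact and convex.

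First, the objective is convex and continuous. For a fixed $\Lambda$, the map $G\mapsto T_G^*D_q^{(m)}T_F$ is affine (indeed linear) in $G$. The Hilbert--Schmidt norm is a norm, so $G\mapsto\|\mathcal{E}^{\text{prob}}_{\Lambda,q}(F,G)\|_{HS}$ is convex and continuous. The maximum over the finitely many $\Lambda$ with $|\Lambda|=m$ is therefore convex and continuous on $\mathcal{D}(F)$. Consequently each $\mathcal{O}_m$ is convex: if $G_1,G_2\in\mathcal{O}_m$, then by the inductive hypothesis every convex combination lies in the convex set $\mathcal{O}_{m-1}$, and its $\mathcal{F}^{(m)}_q$ value is at most the common minimum, hence equal to it. Each $\mathcal{O}_m$ is also closed, since it is the intersection of the closed set $\mathcal{O}_{m-1}$ with a sublevel set of a continuous function.

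The main obstacle is nonemptiness, or equivalently attainment of the infimum. For this we need coercivity at the base step $m=1$. By Proposition \ref{prop2point4}, $\mathcal{F}_q^{(1)}(F,G)=\max_i q_i\|f_i\|\|g_i\|$. If some $f_i=0$, the corresponding $g_i$ does not affect any error operator at all, and the objective is not coercive in that coordinate. In that case we either assume $f_i\neq0$ for all $i$, as is implicit in the paper, or observe that $g_i$ must then be constrained by the duality relations, which is not automatic. Assuming $f_i\ne0$ for all $i$, we have $\|g_i\|\le \mathcal{F}^{(1)}_q(F,G)/(q_i\|f_i\|)$, and $q_i\ge1$. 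Hence the sublevel set $\{G\in\mathcal{D}(F): \mathcal{F}^{(1)}_q(F,G)\le \mathcal{F}^{(1)}_q(F,S_F^{-1}F)\}$ is closed and bounded in a finite-dimensional space, hence compact and nonempty. A continuous function attains its minimum on a compact set, so $\mathcal{O}_1$ is nonempty and compact.

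For $m\ge2$, $\mathcal{O}_{m-1}$ is nonempty and compact by induction, and $\mathcal{F}^{(m)}_q(F,\cdot)$ is continuous on it. Its minimum over $\mathcal{O}_{m-1}$ is therefore attained, so $\mathcal{O}_m$ is nonempty. It is also compact, being a closed subset of a compact set. This gives the claim for every $m$, and in particular each $\mathcal{O}_m$ is closed and bounded.
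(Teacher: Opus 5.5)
Your proof is correct and takes the paper's route. It uses induction on $m$, with $\mathcal{F}^{(m)}_q(F)$ read as the infimum over $\mathcal{O}_{m-1}$; convexity and continuity of a maximum of Hilbert--Schmidt norms of maps linear in $G$; boundedness at $m=1$ from $q_i\|f_i\|\,\|g_i\|\le \mathcal{F}^{(1)}_q(F,G)$; and attainment of the minimum on the compact set $\mathcal{O}_{m-1}$.

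Your compact-sublevel-set argument supplies the nonemptiness at $m=1$ that the paper only asserts. The hypothesis $f_i\neq 0$, which the paper uses tacitly, is genuinely necessary rather than an alternative to ``constraining $g_i$ by duality'': if $f_i=0$, then $g_i$ is completely free in every dual and invisible to every error operator, so the optimal set is unbounded and the statement fails.
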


\begin{proof}
Let
\[
\mathcal D(F):=\{G=\{g_i\}_{i=1}^N:\ T_G^*T_F=I_{H_n}\}
\]
denote the affine space of all dual frames of $F$ and \(
\mathcal{OD}_q^{(m)}(F)\) denote the set of all $m$-erasure probabilistic optimal dual frames of $F.$ For each $m\in\{1,\dots,N\}$, define
\[
\Phi_m(G):=\mathcal F_q^{(m)}(F,G)
=\max_{|\Lambda|=m}\bigl\|T_G^*D_{q,\Lambda}T_F\bigr\|_{HS},
\qquad G\in\mathcal D(F),
\]
where $D_{q,\Lambda}$ is the diagonal matrix corresponding to the erasure set $\Lambda$.
By Definition 2.3, a dual frame $G$ is $m$-erasure probabilistic optimal if and only if
$G\in \mathcal{OD}_q^{(m-1)}(F)$ and
\[
\Phi_m(G)=\min\{\Phi_m(H): H\in \mathcal{OD}_q^{(m-1)}(F)\},
\]
with $\mathcal{OD}_q^{(0)}(F):=\mathcal D(F)$.

We prove the assertion by induction on $m$. For $m=1$, Proposition 2.4 gives
\[
\Phi_1(G)=\max_{1\le i\le N} q_i\|f_i\|\,\|g_i\|.
\]
Hence $\Phi_1$ is convex on $\mathcal D(F)$, since each map $G\mapsto q_i\|f_i\|\,\|g_i\|$ is convex and the maximum of finitely many convex functions is convex. Moreover, if
$G=\{g_i\}_{i=1}^N\in \mathcal{OD}_q^{(1)}(F)$, then
\[
q_i\|f_i\|\,\|g_i\|\le \Phi_1(G)=\inf_{H\in\mathcal D(F)}\Phi_1(H),
\qquad i=1,\dots,N,
\]
so each $\|g_i\|$ is uniformly bounded. Thus $\mathcal{OD}_q^{(1)}(F)$ is bounded.
Since $\Phi_1$ is continuous and $\mathcal D(F)$ is closed and affine, the minimizer set
$\mathcal{OD}_q^{(1)}(F)$ is closed and convex. In finite dimensions, closed and bounded implies compact. Therefore $\mathcal{OD}_q^{(1)}(F)$ is nonempty compact convex.

Assume now that $\mathcal{OD}_q^{(m-1)}(F)$ is nonempty compact convex for some $m\ge 2$.
We show the same for $\mathcal{OD}_q^{(m)}(F)$. First, $\Phi_m$ is continuous on $\mathcal D(F)$, because for each fixed $\Lambda$ the map $G\mapsto T_G^*D_{q,\Lambda}T_F$ is linear in $G$, and the Hilbert--Schmidt norm is continuous. Since the maximum is taken over finitely many $\Lambda$ with $|\Lambda|=m$, $\Phi_m$ is continuous.

Next, $\Phi_m$ is convex. Indeed, for each fixed $\Lambda,$ the map $ \; G\mapsto \bigl\|T_G^*D_{q,\Lambda}T_F\bigr\|_{HS}$, is convex, because it is the composition of a linear map with the Hilbert--Schmidt norm, which is convex. Therefore the maximum over all $|\Lambda|=m$ is also convex.

Since $\mathcal{OD}_q^{(m-1)}(F)$ is compact and nonempty, the continuous function $\Phi_m$
attains its minimum on $\mathcal{OD}_q^{(m-1)}(F)$. Thus $\mathcal{OD}_q^{(m)}(F)$ is nonempty.
Moreover,
\[
\mathcal{OD}_q^{(m)}(F)
=
\Bigl\{
G\in \mathcal{OD}_q^{(m-1)}(F):
\Phi_m(G)=\min_{H\in \mathcal{OD}_q^{(m-1)}(F)}\Phi_m(H)
\Bigr\}.
\]
Hence $\mathcal{OD}_q^{(m)}(F)$ is closed as a level set of a continuous function on the compact set
$\mathcal{OD}_q^{(m-1)}(F)$. Since $\Phi_m$ is convex and $\mathcal{OD}_q^{(m-1)}(F)$ is convex, the set of minimizers is convex. Finally, as a closed subset of the compact set $\mathcal{OD}_q^{(m-1)}(F)$, it is compact, and therefore bounded. Thus $\mathcal{OD}_q^{(m)}(F)$ is nonempty compact convex. By induction, the result holds for every
$m=1,\dots,N$.
\end{proof}

\begin{example}
Let $\mathcal{H}_3=\mathbb{R}^3$ and consider the frame
\[
F=\{f_1,f_2,f_3,f_4\}
=
\left\{
\begin{bmatrix}1\\0\\0\end{bmatrix},
\begin{bmatrix}0\\1\\0\end{bmatrix},
\begin{bmatrix}-2\\1\\1\end{bmatrix},
\begin{bmatrix}1\\-2\\-1\end{bmatrix}
\right\},
\]
with probability distribution 
\(
p_1=p_2=\frac16,\quad p_3=p_4=\frac13.
\)\;
Thus $q_1=q_2=\frac65,\quad q_3=q_4=\frac32.$

Since the excess of F is one and the unique linear dependence relation is $f_1+f_2+f_3+f_4=0,$ every dual frame of F has the form
$g_i=S_F^{-1}f_i+h,\; i=1,\dots,4,$
for some $h\in\mathbb R^3.$

Define the orthogonal operator $J:\mathbb{R}^3\to\mathbb{R}^3$ by
\[
J(x_1,x_2,x_3)=(x_2,x_1,-x_3).
\]
Then $Jf_1=f_2$, $Jf_2=f_1$, $Jf_3=f_4$, and $Jf_4=f_3$. Since $q_1=q_2$ and $q_3=q_4$, it follows that the objective
\[
\Phi(h):=\mathcal{F}_q^{(1)}(F,G(h))
\]
satisfies
\[
\Phi(Jh)=\Phi(h).
\]

Moreover, $\Phi$ is convex since it is the maximum of convex functions. Hence, for any $h\in\mathbb{R}^3$, the symmetrized vector
\[
\widetilde{h}=\frac{h+Jh}{2}
=\left(\frac{a+b}{2},\frac{a+b}{2},0\right)^T,\quad h=(a,b,c)^T,
\]
satisfies
\[
\Phi\left(\widetilde{h} \right)\le \Phi(h).
\]

\[
\Phi(\widetilde{h})
=
\max_{1\le i\le 4} q_i\|f_i\|\left\|S_F^{-1}f_i+\frac{h+Jh}{2}\right\|
=
\max_{1\le i\le 4} q_i\|f_i\|\left\|\frac{(S_F^{-1}f_i+h)+(S_F^{-1}f_i+Jh)}{2}\right\|.
\]
By the triangle inequality, we obtain
\[
\Phi(\widetilde{h})
\le
\max_{1\le i\le 4} \frac{q_i\|f_i\|}{2}
\Big(
\|S_F^{-1}f_i+h\|+\|S_F^{-1}f_i+Jh\|
\Big).
\]
Using the symmetry $Jf_1=f_2$, $Jf_2=f_1$, $Jf_3=f_4$, $Jf_4=f_3$ and $q_1=q_2$, $q_3=q_4$, this becomes
\[
\Phi(\widetilde{h})
\le
\frac12 \max\Big\{
A_1+A_2,\ A_2+A_1,\ A_3+A_4,\ A_4+A_3
\Big\},
\]
where $A_i=q_i\|f_i\|\,\|S_F^{-1}f_i+h\|$. Since $\frac{x+y}{2}\le \max\{x,y\}$ for $x,y\ge 0$, we conclude that
\[
\Phi(\widetilde{h})\le \max_{1\le i\le 4} A_i=\Phi(h).
\]

We justify the inequality $\Phi(\widetilde{h})\le \Phi(h)$. 
Recall that
\[
\Phi(h)=\max_{1\le i\le 4} q_i \|f_i\|\,\|S_F^{-1}f_i+h\|.
\]
For each fixed $i$, the function $h \mapsto \|S_F^{-1}f_i+h\|$
is convex, and hence $h \mapsto q_i\|f_i\|\,\|S_F^{-1}f_i+h\|$ is convex. Therefore $\Phi(h)$, being the maximum of finitely many convex functions, is convex.

Since $\widetilde{h}=\frac{h+Jh}{2}$ and $\Phi(Jh)=\Phi(h)$, convexity gives
\[
\Phi(\widetilde{h})
=
\Phi\!\left(\frac{h+Jh}{2}\right)
\le
\frac{\Phi(h)+\Phi(Jh)}{2}
=
\Phi(h).
\]

Thus, it suffices to consider $h=(a,a,0)^T$. Using Proposition~2.4, we obtain
\[
\mathcal{F}_q^{(1)}(F,G(a))
=
\max\left\{
\frac65\sqrt{2a^2+a+\frac{23}{8}},
\;
\frac32\sqrt{12a^2-6a+\frac94}
\right\}.
\]

At $a=0$ (the canonical dual), $\mathcal{F}_q^{(1)}(F,S_F^{-1}F)=\frac94.$

Both functions
\(
f_1(a)=\frac65\sqrt{2a^2+a+\frac{23}{8}},
\quad
f_2(a)=\frac32\sqrt{12a^2-6a+\frac94}
\)
are convex on the relevant interval. Moreover, \(f_1\) is increasing and \(f_2\) is decreasing near the point where the maximum is minimized. Hence
\[
\mathcal F_q^{(1)}(F,G(a))=\max\{f_1(a),f_2(a)\}
\]
attains its minimum at the unique intersection point of \(f_1\) and \(f_2\), namely
\[
a_0=\frac{83-\sqrt{4142}}{268}.
\]
Therefore this point gives the global minimum. Hence
\[
\mathcal{F}_q^{(1)}(F,G(a_0))<\mathcal{F}_q^{(1)}(F,S_F^{-1}F),
\]
so the canonical dual is not probabilistic optimal. Moreover, the minimizer is unique, and therefore $G(a_0)$ is the unique probabilistic $1$-erasure optimal dual of $F$.
\end{example}

\section{Recovery of Probabilistically Erased Frame Coefficients at Unknown Locations}
\label{sec:unknown_locations_prob}
\noindent The results of the previous sections characterize optimal dual frames for minimizing reconstruction errors arising from probabilistic erasures. These results implicitly assume that the locations of the erased coefficients are known. In many applications, however, the receiver has access only to the surviving coefficients and must first determine which coefficients have been lost. This motivates the study of erasure-location identification in the probabilistic setting. In this section, we develop an unknown-location recovery framework and establish geometric conditions under which the surviving coefficient set can be recovered uniquely for almost every signal, thereby complementing the optimal reconstruction theory developed earlier. In many transmission systems one may receive a subsequence of the frame coefficients
\[
\big\{\langle f,f_{i_1}\rangle,\ldots,\langle f,f_{i_{N-m}}\rangle\big\},
\qquad 1\le i_1<\cdots<i_{N-m}\le N,
\]
but the index set of the surviving coefficients (equivalently, the erasure set) is not known to the receiver.
In this section we formulate an ``unknown-location'' recovery principle in our \emph{probabilistic erasure} setting. We consider $m$ erasures at unknown locations.
For a subset $\Lambda\subset\{1,\dots,N\}$ with $|\Lambda|=m$ we denote the \emph{survival set} by
\[
I:=\Lambda^c,\qquad |I|=N-m,
\]
and write $I=\{i_1<\cdots<i_{N-m}\}$ when needed.

\subsection{Testing subspaces and probabilistic identifiability}

Recall that $T_F:\mathcal H_n\to\mathbb C^N$ is the analysis operator $T_F f=(\langle f,f_i\rangle)_{i=1}^N$.
For each $I=\{i_1<\cdots<i_{N-m}\}$ define the coordinate projection
$P_I:\mathbb C^N\to\mathbb C^{N-m}$ by
\[
P_I(c_1,\dots,c_N)=(c_{i_1},\dots,c_{i_{N-m}}),
\]
and define the corresponding (restricted) analysis map
\[
T_I := P_IT_F:\mathcal H_n\longrightarrow \mathbb C^{N-m}.
\]
Thus $T_I f$ is exactly the received coefficient vector when the surviving indices are $I$.

\begin{defn}
\label{def:prob_almost_robust}
Fix $m\in\{1,\dots,N-1\}$.
We say that $F=\{f_i\}_{i=1}^N$ is \emph{probabilistically almost robust with respect to $m$ erasures}  if there exists a set $\mathcal H_0\subset\mathcal H_n$ which is a finite union of
proper subspaces (hence of measure zero) such that for every $f\in\mathcal H_n\setminus\mathcal H_0$ the survival set $I$
(and hence the erasure locations $\Lambda$) is uniquely determined from the received vector $T_I f$ for all survival sets $I$
that occur with positive probability under the erasure law.
\end{defn}
This terminology reflects that the survival set can be identified uniquely for all signals outside a measure-zero exceptional set.
\noindent
In particular, if the erasure set $\Lambda$ is drawn from a distribution $\mathbb P$ supported on $\binom{N}{m}$,
then Definition~\ref{def:prob_almost_robust} requires identifiability for all $I$ with $\mathbb P(\Lambda^c=I)>0$.
(The weights $\{q_i\}$ enter the model through the probabilistic erasure mechanism and the error operator used elsewhere
in the paper; the unknown-location identifiability itself is a geometric property of the restricted analysis maps.)

\begin{thm}
\label{thm:prob_robust_characterization}
Let $F=\{f_i\}_{i=1}^N$ be a frame for $\mathcal H_n$, and fix $m$.
Let $\mathcal I_{m}:=\{I\subset\{1,\dots,N\}:\ |I|=N-m,\ \mathbb P(\Lambda^c=I)>0\}$ be the family of survival sets with
positive probability.
Then $F$ is probabilistically almost robust with respect to $m$ erasures if and only if the family of subspaces
\[
\big\{\,T_I(\mathcal H_n)\ :\ I\in\mathcal I_m\,\big\}
\]
consists of mutually different subspaces of $\mathbb C^{N-m}$.
\end{thm}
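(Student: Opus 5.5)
The plan is to identify the survival set by a subspace-membership test on the received vector. For $I\in\mathcal I_m$ write $W_I:=T_I(\mathcal H_n)\subset\mathbb C^{N-m}$. A received vector $y=T_If$ is compatible with a survival set $I'\in\mathcal I_m$ exactly when $y\in W_{I'}$, since then $y=T_{I'}g$ for some $g\in\mathcal H_n$. So "$I$ is uniquely determined from $T_If$" means: $I$ is the only index set in $\mathcal I_m$ with $T_If\in W_{I'}$. Both directions of the theorem reduce to comparing the subspaces $W_I$ pairwise.

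\emph{Necessity.} Suppose $W_I=W_{I'}$ for two different sets $I\neq I'$ in $\mathcal I_m$. Then for every $f\in\mathcal H_n$ we have $T_If\in W_{I'}$. Hence there is $g$ with $T_{I'}g=T_If$, and the receiver cannot tell whether $I$ or $I'$ occurred. Both sets have positive probability, so the set of signals for which identification fails is all of $\mathcal H_n$. This is not contained in any finite union of proper subspaces, contradicting Definition~\ref{def:prob_almost_robust}. Therefore the subspaces $W_I$, $I\in\mathcal I_m$, must be mutually different.

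\emph{Sufficiency.} Assume the $W_I$ are mutually different. For each ordered pair $I\neq I'$ in $\mathcal I_m$, set
\[
V_{I,I'}:=T_I^{-1}\bigl(W_I\cap W_{I'}\bigr)\subset\mathcal H_n,
\]
which is a subspace, and define $\mathcal H_0:=\bigcup_{I\neq I'}V_{I,I'}$. This is a finite union because $\mathcal I_m$ is finite. If $f\notin\mathcal H_0$ and $I$ is the true survival set, then $T_If\notin W_{I'}$ for every $I'\neq I$, so $I$ is the unique compatible set and is identified. It remains to show that each $V_{I,I'}$ is a \emph{proper} subspace. Since $T_I$ maps onto $W_I$, this holds exactly when $W_I\not\subseteq W_{I'}$.

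I expect this last step to be the main obstacle. Distinctness of $W_I$ and $W_{I'}$ only rules out equality, not the strict inclusion $W_I\subsetneq W_{I'}$. The way around it is a dimension count. In the erasure-recovery setting each surviving family $\{f_i\}_{i\in I}$ still spans $\mathcal H_n$; this is implicit, since otherwise $f$ cannot be reconstructed from $T_If$ at all. Then every $T_I$ is injective and $\dim W_I=n$ for all $I\in\mathcal I_m$. Two distinct subspaces of the same finite dimension cannot be nested, so $W_I\cap W_{I'}$ is a proper subspace of $W_I$ and $V_{I,I'}\neq\mathcal H_n$. I would state this spanning hypothesis explicitly at that point. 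If it must be avoided, the fallback is to strengthen the hypothesis to pairwise non-inclusion of the $W_I$, which is exactly what the argument above uses.

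Finally, $\mathcal H_0$ is a finite union of proper subspaces and hence has measure zero, and unique identification holds off $\mathcal H_0$ for every $I\in\mathcal I_m$. This completes the equivalence.
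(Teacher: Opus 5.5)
Your argument is the same as the paper's: the same exceptional set $\mathcal H_0=\bigcup_{I\neq J}T_I^{-1}(W_I\cap W_J)$ for sufficiency, and the same ``every received vector is compatible with both survival sets'' argument for necessity. The only difference is the step you flagged, and you are right to flag it. The paper simply asserts that $W_I\cap W_J$ is a proper subspace of $W_I$ because $W_I\neq W_J$. That does not follow, and in fact the ``if'' direction is false as stated.

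For a counterexample, let $F=\{e_1,e_2,e_1+e_2,e_1+3e_2,e_3\}\subset\mathbb C^3$, $m=1$, and $p_i=\tfrac15$ for all $i$, so all five survival sets lie in $\mathcal I_1$. The four ranges $W_I$ with $5\in I$ are the distinct hyperplanes $x_3=x_1+x_2$, $x_3=x_1+3x_2$, $x_3=3x_2-2x_1$ and $x_3=2x_1+x_2$ of $\mathbb C^4$. The fifth range, $W_{\{1,2,3,4\}}=\{(a,b,a+b,a+3b)\}$, is $2$-dimensional, so the family consists of mutually different subspaces. Yet $W_{\{1,2,3,4\}}\subsetneq W_{\{1,2,3,5\}}$. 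So for \emph{every} $f$, an erasure of $f_5$ produces a vector that is also consistent with an erasure of $f_4$, and $F$ is not probabilistically almost robust. A smaller instance with restricted support is $F=\{e_1,e_1,e_2\}\subset\mathbb C^2$ with $p_1=0$, $p_2=p_3=\tfrac12$, where $W_{\{1,2\}}=\{(a,a)\}\subsetneq W_{\{1,3\}}=\mathbb C^2$.

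Consequently the spanning hypothesis is not ``implicit''. Identifying $I$ is meaningful even when $\{f_i\}_{i\in I}$ does not span. The paper treats reconstruction from the surviving subsystem as conditional, and it uses $\dim W_I=n$ only in the scaling theorem, under its uniform-excess hypothesis. You must add the hypothesis explicitly; with it, or with any hypothesis making all $W_I$ equidimensional, your proof is complete.

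Your fallback is actually the sharp statement. Your necessity argument applies verbatim to inclusions, since $W_I\subseteq W_{I'}$ already makes every $T_If$ compatible with $I'$. Hence, with no extra hypothesis, $F$ is probabilistically almost robust with respect to $m$ erasures if and only if $W_I\not\subseteq W_{I'}$ for all distinct $I,I'\in\mathcal I_m$. This is equivalent to mutual distinctness whenever the $W_I$ all have the same dimension.
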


\begin{proof}

Assume $T_I(\mathcal H_n)\neq T_J(\mathcal H_n)$ for all distinct $I,J\in\mathcal I_m$.
Fix $I\in\mathcal I_m$.
For each $J\in\mathcal I_m$ with $J\neq I$, the intersection
\[
T_I(\mathcal H_n)\cap T_J(\mathcal H_n)
\]
is a proper subspace of $T_I(\mathcal H_n)$, hence its preimage under $T_I$ is a proper subspace of $\mathcal H$:
\[
\mathcal H_{I,J}:=T_I^{-1}\big(T_I(\mathcal H_n)\cap T_J(\mathcal H_n)\big)\subsetneq \mathcal H_n.
\]
Let $\mathcal H_0:=\bigcup_{I\in\mathcal I_m}\ \bigcup_{J\in\mathcal I_m,\ J\neq I}\ \mathcal H_{I,J}$,
a finite union of proper subspaces.
Now take any $f\in\mathcal H_n\setminus\mathcal H_0$ and suppose the received vector equals $x=T_I f$ for some
unknown $I\in\mathcal I_m$.
If $x\in T_J(\mathcal H_n)$ for some $J\neq I$, then $f\in \mathcal H_{I,J}\subset\mathcal H_0$, a contradiction.
Hence $x$ belongs to $T_I(\mathcal H_n)$ and to no other $T_J(\mathcal H)$ with $J\in\mathcal I_m$, $J\neq I$,
so $I$ is uniquely determined.

Conversely, if $T_I(\mathcal H_n)=T_J(\mathcal H_n)$ for some distinct $I,J\in\mathcal I_m$, then for every $f\in\mathcal H_n$
the received vector $T_I f$ lies in $T_J(\mathcal H_n)$ as well, so the survival set cannot be uniquely identified from
$T_I f$ on any set of full measure. Thus probabilistic almost robustness fails.
\end{proof}

\subsection{A generic scaling result in the probabilistic setting}

A key feature of the unknown-location model is that identifiability may fail for certain ``degenerate'' frames but can be
restored by a suitable rescaling, analogous to the deterministic unknown-location setting.

\begin{thm}
\label{thm:prob_scaling}
Let $F=\{f_i\}_{i=1}^N$ be a frame for $\mathcal H_n$ with uniform excess $k$ (so $N=n+k$), and fix $m<k$.
Assume the erasure law satisfies $\mathbb P(\Lambda^c=I)>0$ for all (or at least for some family of) $|I|=N-m$ survival sets.
Then there exist nonzero scalars $s_1,\dots,s_N$ such that the scaled frame
\[
\widetilde F=\{\,\widetilde f_i:=s_i f_i\,\}_{i=1}^N
\]
is probabilistically almost robust with respect to $m$ erasures (for the same erasure law).
\end{thm}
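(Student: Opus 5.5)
By Theorem~\ref{thm:prob_robust_characterization}, it suffices to produce nonzero scalars $s_1,\dots,s_N$ such that the subspaces $T^{\widetilde F}_I(\mathcal H_n)$, $I\in\mathcal I_m$, are mutually different. Here $T^{\widetilde F}_I=P_IT_{\widetilde F}$. The probability law plays no further role: the claim reduces to a purely geometric genericity statement about the finitely many survival sets of size $N-m$. I will show that the set of ``bad'' scalings is contained in a proper algebraic subset of $(\mathbb C\setminus\{0\})^N$, or of $(\mathbb R\setminus\{0\})^N$ in the real case. Hence a generic choice works.

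First I describe the subspaces. Let $\widetilde S=\operatorname{diag}(s_1,\dots,s_N)$. Then $T_{\widetilde F}f=\widetilde S^{*}T_Ff$ up to conjugation, so $T^{\widetilde F}_I(\mathcal H_n)=\widetilde S_I\,T_I(\mathcal H_n)$ with $\widetilde S_I=\operatorname{diag}(\bar s_i)_{i\in I}$.

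Uniform excess $k$ means every $n$ of the $f_i$ are linearly independent, so every subfamily of size $N-m\ge n+1$ spans $\mathcal H_n$. Therefore $T_I$ is injective and $V_I:=T_I(\mathcal H_n)$ is an $n$-dimensional subspace of $\mathbb C^{N-m}$. Its orthogonal complement in $\mathbb C^{N-m}$ is the space of linear dependencies $\{c\in\mathbb C^{I}:\sum_{i\in I}\bar c_if_i=0\}$, of dimension $N-m-n=k-m\ge1$. This is where $m<k$ is used.

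Two survival sets $I\ne J$ are both sets of $N-m$ indices, but as coordinate sets they label different vectors. The statement $V_I=V_J$ compares subspaces of the same $\mathbb C^{N-m}$, with coordinates identified in increasing order.

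Fix $I\ne J$. After scaling, equality $\widetilde V_I=\widetilde V_J$ means that the dependency spaces coincide: $\widetilde S_I^{-*}V_I^\perp=\widetilde S_J^{-*}V_J^\perp$. This is a polynomial condition on $s$, namely the vanishing of all maximal minors of the stacked matrix. So the bad set for the pair $(I,J)$ is Zariski closed. Since there are finitely many pairs, it suffices to show that each bad set is proper, i.e. to exhibit one scaling $s$ with $\widetilde V_I\ne\widetilde V_J$.

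To do this, pick an index $t\in I\setminus J$, and let $r$ be its position in the increasing ordering of $I$. By uniform excess there is a dependency $c$ supported in $I$ with $c_t\ne0$. (Take $t$ together with any $n$ other indices of $I$; these $n+1$ vectors are dependent, and the coefficient of $f_t$ is nonzero because the other $n$ vectors are independent.) Now vary only $s_t$, keeping all other $s_i=1$.

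The space $\widetilde V_J$ does not involve $s_t$, since $t\notin J$. The space $\widetilde V_I^\perp$ contains the vector obtained from $c$ by dividing its $r$-th coordinate by $\bar s_t$. Suppose $\widetilde V_I=\widetilde V_J$ held for all $s_t\ne0$. Then $\widetilde V_J^\perp$ would contain these vectors for every value of $s_t$. Taking differences and limits, it would contain both $e_r$ and the vector $c$ with its $r$-th entry set to zero.

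The containment of $e_r$ in $\widetilde V_J^\perp=V_J^\perp$ means that the $r$-th element $f_{j_r}$ of the family indexed by $J$ satisfies $\bar 1\cdot f_{j_r}=0$. This is impossible, since frame vectors with uniform excess are nonzero. Hence some $s_t$ separates $I$ from $J$, and the bad set for this pair is proper.

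Finally, a finite union of proper Zariski-closed sets, together with the coordinate hyperplanes $\{s_i=0\}$, does not exhaust $\mathbb C^N$ (or $\mathbb R^N$). Choosing $s$ outside this union gives the required nonzero scalars, and the scaled frame $\widetilde F$ is probabilistically almost robust by Theorem~\ref{thm:prob_robust_characterization}.

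The main obstacle I expect is the separation step. There I must check carefully that the coordinate identification between $\mathbb C^{I}$ and $\mathbb C^{J}$ via increasing order is what the paper intends, and that the limiting argument yielding $e_r\in V_J^\perp$ is valid. It is clean because the family $\{\widetilde S_I^{-*}V_I^\perp\}$ depends polynomially on $1/\bar s_t$ and $V_J^\perp$ is fixed.
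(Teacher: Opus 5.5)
Your proof is correct and takes the same route as the paper. Both reduce, via Theorem~\ref{thm:prob_robust_characterization}, to making the ranges $T_I(\mathcal H_n)$, $I\in\mathcal I_m$, pairwise distinct, treat each coincidence as a polynomial condition on $s$, and choose $s$ outside a finite union of proper algebraic sets; the one real difference is that the paper only asserts each bad set is proper, while your perturbation of a single $s_t$ with $t\in I\setminus J$ (which would force $e_r\in T_J(\mathcal H_n)^\perp$, i.e.\ $f_{j_r}=0$) proves it, filling a step the paper leaves unjustified.
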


\begin{proof}
Consider the mapping $T_I=P_IT_{\widetilde F}$ corresponding to the scaled frame.
Each subspace $T_I(\mathcal H_n)\subset\mathbb C^{N-m}$ depends algebraically on the scaling vector
$s=(s_1,\dots,s_N)\in(\mathbb C^\ast)^N$ through the entries of $T_{\widetilde F}$.
For fixed distinct $I,J$, the condition $T_I(\mathcal H_n)=T_J(\mathcal H_n)$ imposes a set of polynomial constraints on $s$
(equivalently, a proper algebraic variety), because equality of the ranges forces all $(n\times n)$ minors defining the
Grassmannian coordinates of these subspaces to match.
Since $m<k$ (so $N-m>n$), the ranges $T_I(\mathcal H_n)$ have dimension $n$ inside $\mathbb C^{N-m}$, and the set of scalings
for which two such ranges coincide is a proper (hence measure-zero) subset of $(\mathbb C^\ast)^N$.
Taking the finite union over all distinct pairs $I\neq J$ in $\mathcal I_m$ still yields a measure-zero exceptional set.
Choose $s$ outside this union; then $T_I(\mathcal H_n)\neq T_J(\mathcal H_n)$ for all distinct $I,J\in\mathcal I_m$.
The conclusion follows from Theorem~\ref{thm:prob_robust_characterization}.
\end{proof}

\subsection{A recovery principle and an implementable test}

Once the survival set $I$ is identified, the original signal $f$ can be reconstructed whenever the surviving subsystem
$\{\widetilde f_i\}_{i\in I}$ admits exact reconstruction. In that case, using a suitable dual frame
$\widetilde G=\{\widetilde g_i\}_{i=1}^N$ associated with the surviving coefficients, one has
\[
f=\sum_{i\in I}\langle f,\widetilde f_i\rangle\,\widetilde g_i.
\]
Therefore, the central task is to identify $I$ from the received coefficient vector.
Theorem~\ref{thm:prob_robust_characterization} is geometric; to turn it into a computational test, one can proceed as follows.

For each $I=\{i_1<\cdots<i_{N-m}\}$, choose a full row-rank matrix $M_I\in\mathbb C^{(N-m-n)\times (N-m)}$
whose nullspace equals $T_I(\mathcal H)$:
\[
\mathcal N(M_I)=T_I(\mathcal H_n).
\]
(Equivalently, $M_I$ can be obtained by computing a basis of the orthogonal complement of $T_I(\mathcal H_n)$ in
$\mathbb C^{N-m}$.) Then, for noise-free data $x=T_I f$ we have $M_I x=0$, while $M_J x\neq 0$ for all $J\neq I$ for
generic $f$ (outside $\mathcal H_0$). This yields an identification rule:
\[
I^\ast=\arg\min_{I\in\mathcal I_m}\ \|M_I x\|.
\]
In the probabilistic setting, it is natural to incorporate the weights $\{q_i\}$ (or equivalently $\{p_i\}$) by restricting
the search to the most likely survival sets (or by adding a likelihood penalty).
For example, one can select
\[
I^\ast=\arg\min_{I\in\mathcal I_m}\ \big(\|M_I x\|^2-\lambda\log\mathbb P(\Lambda^c=I)\big),
\]
with $\lambda>0$ tuned to the anticipated noise level.

\begin{prop}
\label{thm:prob_identification}
Assume $F$ is probabilistically almost robust with respect to $m$ erasures and fix $I\in\mathcal I_m$.
Let $M_I$ be any matrix satisfying $\mathcal N(M_I)=T_I(\mathcal H_n)$.
Then there exists a measure-zero set $\mathcal H_0\subset\mathcal H_n$ such that for every $f\in\mathcal H_n\setminus\mathcal H_0$
the received vector $x=T_I f$ satisfies
\[
\|M_I x\|=0
\quad\text{and}\quad
\|M_J x\|>0\ \ \text{for all }J\in\mathcal I_m,\ J\neq I.
\]
In particular, the minimization rule $I^\ast=\arg\min_{J\in\mathcal I_m}\|M_J x\|$ recovers the true survival set $I$
with probability $1$ under the erasure law (and with respect to $f$ drawn from any distribution absolutely continuous
with respect to Lebesgue measure on $\mathcal H_n$).
\end{prop}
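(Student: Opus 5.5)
The plan is to reuse the exceptional set built in the proof of Theorem~\ref{thm:prob_robust_characterization} and then translate membership in ranges into statements about null spaces of the test matrices $M_J$. By that theorem, probabilistic almost robustness means the subspaces $T_J(\mathcal H_n)$, $J\in\mathcal I_m$, are mutually different. For the fixed $I$ and each $J\in\mathcal I_m$ with $J\neq I$, I would set
\[
\mathcal H_{I,J}:=T_I^{-1}\bigl(T_I(\mathcal H_n)\cap T_J(\mathcal H_n)\bigr)
\]
and take $\mathcal H_0:=\bigcup_{J\neq I}\mathcal H_{I,J}$. This is a finite union over the finite family $\mathcal I_m$.

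The first step is to show that each $\mathcal H_{I,J}$ is a proper subspace of $\mathcal H_n$, so that $\mathcal H_0$ has Lebesgue measure zero. It is a subspace as the preimage of a subspace under a linear map. If it were all of $\mathcal H_n$, then $T_I(\mathcal H_n)\subseteq T_J(\mathcal H_n)$. I would then argue equality of dimensions: when $\{f_i\}_{i\in I}$ and $\{f_i\}_{i\in J}$ are frames for $\mathcal H_n$, both $T_I$ and $T_J$ are injective with $n$-dimensional ranges, which is implicit in the robustness setting since reconstruction after erasure requires it. Inclusion then forces equality, contradicting distinctness.

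This dimension issue is the main obstacle. Without injectivity of $T_J$, inclusion does not give equality, and the statement would need the hypothesis made explicit. My first attempt would be to derive it from the definition of almost robustness: if $T_I(\mathcal H_n)\subsetneq T_J(\mathcal H_n)$, then every received vector $T_If$ also lies in $T_J(\mathcal H_n)$. In that case $I$ is never uniquely determined, which contradicts the definition directly. So I would phrase the properness argument through the definition rather than through dimensions, as was effectively done in the proof of Theorem~\ref{thm:prob_robust_characterization}.

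The second step is immediate once $\mathcal H_0$ is in hand. Let $x=T_If$. Since $\mathcal N(M_I)=T_I(\mathcal H_n)$, we get $M_Ix=0$. For $J\neq I$, suppose $M_Jx=0$. Then $x\in\mathcal N(M_J)=T_J(\mathcal H_n)$, so $f\in\mathcal H_{I,J}\subseteq\mathcal H_0$. Hence for every $f\notin\mathcal H_0$ we have $\|M_Jx\|>0$ for all $J\in\mathcal I_m$ with $J\neq I$, and so $I$ is the unique minimizer of $J\mapsto\|M_Jx\|$, with minimum value $0$.

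For the probabilistic statement, I would take the union of the exceptional sets over all $I\in\mathcal I_m$. This is still a finite union of proper subspaces and therefore Lebesgue-null. Any distribution of $f$ that is absolutely continuous with respect to Lebesgue measure gives this union probability zero. Conditioning on the survival set $I$, which ranges over $\mathcal I_m$ with positive probability, the rule $I^\ast$ returns $I$ almost surely. Averaging over the erasure law $\mathbb P$ then gives recovery with probability $1$.
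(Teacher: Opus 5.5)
Your proposal is correct and follows the paper's proof essentially verbatim: it uses the same exceptional set $\mathcal H_0=\bigcup_{J\neq I}T_I^{-1}\bigl(T_I(\mathcal H_n)\cap T_J(\mathcal H_n)\bigr)$ and the same translation $M_Jx=0\iff x\in T_J(\mathcal H_n)$. Your extra care in the properness step is warranted, because the paper infers that $T_I(\mathcal H_n)\cap T_J(\mathcal H_n)$ is proper in $T_I(\mathcal H_n)$ from $T_I(\mathcal H_n)\neq T_J(\mathcal H_n)$ alone, which misses the case $T_I(\mathcal H_n)\subsetneq T_J(\mathcal H_n)$; your appeal to the definition of almost robustness (every $T_If$ would then lie in $T_J(\mathcal H_n)$) closes that case.
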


\begin{proof}
Since $\mathcal N(M_I)=T_I(\mathcal H_n)$, we have $M_I x=0$ for all $x\in T_I(\mathcal H)$, hence $\|M_I x\|=0$.
For $J\neq I$, probabilistic almost robustness implies $T_I(\mathcal H_n)\neq T_J(\mathcal H_n)$, so
$T_I(\mathcal H_n)\cap T_J(\mathcal H_n)$ is a proper subspace of $T_I(\mathcal H_n)$.
Let $\mathcal H_{I,J}:=T_I^{-1}(T_I(\mathcal H_n)\cap T_J(\mathcal H_n))$, a proper subspace of $\mathcal H_n$.
With $\mathcal H_0=\bigcup_{J\neq I}\mathcal H_{I,J}$ (finite union), for $f\notin\mathcal H_0$ we have
$x=T_I f\notin T_J(\mathcal H_n)$, hence $M_J x\neq 0$ and $\|M_J x\|>0$.
The probabilistic statement follows because the erasure law chooses $I\in\mathcal I_m$ with positive probability and the
exceptional set $\mathcal H_0$ has measure zero.
\end{proof}

\medskip
\noindent\textbf{Remark.}
Theorems~\ref{thm:prob_robust_characterization} and \ref{thm:prob_scaling} are the probabilistic analogues of the
unknown-location robustness results in the deterministic setting:
the probability model enters through the support $\mathcal I_m$ and (optionally) through likelihood-weighted testing,
while the identifiability mechanism is governed by the geometry of the restricted analysis ranges.
\\~\\

\noindent
	{\bf Acknowledgments:} 
	The authors are grateful to the Mohapatra Family Foundation and the College of Graduate Studies of the University of Central Florida for their support during this research.
	
	\section{Declaration} 
\noindent Ethical approval: Not applicable.\\
Data Availability Statement: Not Applicable.\\
Conflict of Interest: Not Applicable.\\
Clinical Trial: Not applicable.

\section{Funding}

This research did not receive any specific grant from funding agencies in the public, commercial, or not-for-profit sectors.

\bibliographystyle{amsplain}

\end{document}